\newcommand{\diff}[2]{\mbox{{\rm Diff}{${\,}_{#1}({\mathbb C}^{#2},0)$}}}
\newcommand{\diffr}[2]{\mbox{{\rm Diff}{${\,}_{#1}({\mathbb R}^{#2},0)$}}}
\newcommand{\diffh}[2]{\mbox{$\widehat{\rm Diff}{{\,}_{#1}({\mathbb C}^{#2},0)}$}}
\newcommand{\ndf}[2]{\mbox{$\widehat{\rm End}{{\,}_{#1}({\mathbb C}^{#2},0)}$}}
\newcommand{\diffrh}[2]{\mbox{$\widehat{\rm Diff}{{\,}_{#1}({\mathbb R}^{#2},0)}$}}
\newcommand{\cn}[1]{\mbox{(${\mathbb C}^{#1},0$)}}
\newcommand{\rn}[1]{\mbox{(${\mathbb R}^{#1},0$)}}
\newcommand{\Xn}[2]{\mbox{${\mathcal X}_{#1}\cn{#2}\,$}}
\newcommand{\Xrn}[2]{\mbox{${\mathcal X}_{#1}\rn{#2}\,$}}
\newcommand{\Xf}[2]{\mbox{$\hat{\mathcal X}_{#1}\cn{#2}\,$}}
\newcommand{\Xr}[2]{\mbox{$\hat{\mathcal X}_{#1}\rn{#2}\,$}}
\newtheorem{pro}{Proposition}[section]
\newtheorem{teo}{Theorem}[section]
\newtheorem{cor}{Corollary}[section]
\newtheorem{lem}{Lemma}[section]
\newtheorem{rem}{Remark}[section]
\newtheorem{defi}{Definition}[section]
\newtheorem*{conj}{Conjecture}
\begin{document}

\title[Embedding smooth and formal diffeomorphisms]
{Embedding smooth and formal diffeomorphisms through the Jordan-Chevalley decomposition}

\author{Javier Rib\'{o}n}
\address{Instituto de Matem\'{a}tica, UFF, Rua M\'{a}rio Santos Braga S/N
Valonguinho, Niter\'{o}i, Rio de Janeiro, Brasil 24020-140}
\thanks{e-mail address: javier@mat.uff.br}
\thanks{MSC-class. Primary: 34C20, 37F75; Secondary: 34C41, 34M25}
\thanks{Keywords: local diffeomorphism, normal form, embedding flow}
\date{\today}
\maketitle

\bibliographystyle{plain}
\section*{Abstract}
In  [Xiang Zhang, The embedding flows of $C^{\infty}$
hyperbolic diffeomorphisms, J. Differential Equations
250 (2011), no. 5, 2283-2298] Zhang proved that any local smooth hyperbolic
diffeomorphism whose eigenvalues are weakly nonresonant is embedded
in the flow of a smooth vector field.
We present a new, simpler and more conceptual  proof
of such result using the Jordan-Chevalley
decomposition in algebraic groups and the properties of the
exponential operator.

We characterize the hyperbolic smooth (resp. formal) diffeomorphisms that
are embedded in a smooth (resp. formal) flow.
We introduce a criterium showing that the
presence of weak resonances for a diffeomorphism
plus two natural conditions imply that
it is not embeddable. This solves a conjecture of Zhang.
The criterium is optimal, we provide a method to construct
embeddable diffeomorphisms with weak resonances if we remove
any of the conditions.
\section{Introduction}
We are interested on studying embedding flows for real analytic,
complex analytic and $C^{\infty}$ local diffeomorphisms.

We denote by $\Xrn{\infty}{n}$ and $\diffr{\infty}{n}$
the $C^{\infty}$ local singular vector fields and diffeomorphisms
respectively defined in a neighborhood of $0 \in {\mathbb R}^{n}$.

We denote by $\Xrn{}{n}$
(resp. ${\mathcal X} \cn{n}$) the set of germs of real analytic
(resp. complex analytic) vector fields which are singular at $0$.
The formal completion of these spaces are denoted by
$\Xr{}{n}$ and $\hat{\mathcal X} \cn{n}$ respectively. Indeed
a formal vector field $X \in \Xf{}{n}$ is an expression of the form
\[ \sum_{j=1}^{n} a_{j}(z_{1},\hdots,z_{n}) \frac{\partial}{\partial z_{j}}
\ {\rm where} \ a_{1},\hdots,a_{n} \in {\mathfrak m}
\]
and ${\mathfrak m}$ is the maximal ideal of ${\mathbb C}[[z_{1},\hdots,z_{n}]]$.
Moreover $X$ belongs to $\Xr{}{n}$ if and only if
all the coefficients of the power series $a_{1}$, $\hdots$, $a_{n}$
are real.

We define $\diffr{}{n}$ (resp. ${\rm Diff} ({\mathbb C}^{n},0)$)
the group of local real analytic (resp. complex analytic)
diffeomorphisms defined in a neighborhood of $0 \in {\mathbb C}^{n}$.
We denote by $\diffrh{}{n}$ and  $\diffh{}{n}$ respectively their formal completions.
A formal diffeomorphism $\varphi \in \diffh{}{n}$ is an expression of the form
\[ (a_{1}(z_{1},\hdots,z_{n}), \hdots,
a_{n}(z_{1},\hdots,z_{n}))
\ {\rm where} \ a_{1},\hdots,a_{n} \in {\mathfrak m}
 \]
such that its first jet is an invertible linear operator.
The set $\diffh{}{n}$ is a group for the composition.
The composition in
$\diffh{}{n}$ is defined in the natural way by taking the composition in
$\diff{}{n}$ and passing to the limit in the Krull topology
(see \cite{Eisenbud}, page 204).

We say that $\varphi \in \diffr{\infty}{n}$
(resp. $\diffr{}{n}$, $\diff{}{n}$, $\diffrh{}{n}$, $\diffh{}{n}$) has an
{\it embedding flow} if
there exists $X \in \Xrn{\infty}{n}$
(resp. $\Xrn{}{n}$, $\Xn{}{n}$, $\Xr{}{n}$, $\Xf{}{n}$)
such that $\varphi = {\rm exp}(X)$, i.e. $\varphi$ is the $1$ time flow
of $X$. This concept is defined even if $X$ is formal, in fact
${\rm exp}(X)$ is a formal diffeomorphism such that
$j^{k} {\rm exp}(X) = j^{k} {\rm exp}(X_{k})$ for any $k \in {\mathbb N}$
where $X_{k}$ is an analytic
vector field such that $j^{k} X = j^{k} X_{k}$.

The embedding flow problem is classical.
For instance the embedding flow problem has been deeply studied for $1$-dimensional real
diffeomorphisms (see \cite{Belitskii-Tkachenko} \cite{Beyer-Channell} \cite{Lam-diff} \cite{Lam-hom}).
Palis proved for arbitrary dimension
that the $C^{1}$ diffeomorphisms in a compact manifold that are embedded in a
$C^{1}$ flow form a meagre set \cite{Palis-few}.


Let $\varphi \in \diff{}{}$ be a one variable complex analytic diffeomorphism
such that $j^{1} \varphi \neq Id$. The embedding flow problem is
equivalent to the linearization problem. Indeed $\varphi$
is embedded in  a local complex analytic flow
if and only $\varphi$ is analytically linearizable.
Moreover $\varphi$
has a formal embedding flow
in $\Xf{}{}$ if and only if $\varphi$ is formally linearizable.
In the case $j^{1} \varphi =Id$ and $\varphi \neq Id$ the
diffeomorphism $\varphi$ is always embedded in a formal flow
whereas it has an analytic embedding flow if and only if
the Ecalle-Voronin invariants of $\varphi$
are trivial \cite{V}.

Even in the one dimensional case there are consequences regarding
integrability of complex analytic foliations.
Given a complex analytic codimension $1$ foliation ${\mathcal F}$
and a leaf ${\mathcal L}$ we can associate to ${\mathcal L}$ its
holonomy group ${\mathcal H}$.
It can be interpreted as a subgroup of $\diff{}{}$.
The integrability of the foliation is related to the solvable nature of
these holonomy groups \cite{Paul:pre}. The existence of embedding flows,
in the solvable case, for the elements of the group is related to
the existence of analytic first integrals, integrating factors...
In a different but analogous context the existence of
embedding flows has been applied to find analytic inverse integrating
factors in the neighborhood of limit cycles and elementary singular points
of real analytic planar vector fields \cite{Enciso-Peralta}.

Our point of view in the embedding flow problem
is based on taking profit of the Jordan-Chevalley decomposition
in algebraic groups.
More precisely any
$\varphi  \in \diffh{}{n}$ can be written uniquely
in the Jordan multiplicative
form $\varphi=\varphi_{s} \circ \varphi_{u}= \varphi_{u} \circ \varphi_{s}$
where $\varphi_{s}, \varphi_{u} \in \diffh{}{n}$,
$\varphi_{s}$ is formally conjugated to a diagonal linear transformation
(semisimple part) and
$j^{1} \varphi_{u}$ is a unipotent linear operator (unipotent part).
Analogously any $X \in \Xf{}{n}$ can be written in a unique way in
the Jordan additive form $X = X_{s} + X_{N}$ where
$X_{s},X_{N} \in \Xf{}{n}$,
$X_{s}$ is formally conjugated to a diagonal linear vector field (semisimple part),
$j^{1} X_{N}$ is a nilpotent linear operator (nilpotent part) and
$[X_{s},X_{N}]=0$.
A positive outcome of the decomposition is that we
obtain a natural normal form for $\varphi$ by linearizing $\varphi_{s}$.
Moreover, since affine algebraic groups contain the semisimple and unipotent parts
of all their elements (see 15.3, page 99 \cite{Humphreys})
we can use the Jordan-Chevalley decomposition to study
invariant structures by the action of a diffeomorphism.
For instance given $f \in {\mathbb C}[[z_{1},\hdots,z_{n}]]$ the
set $G=\{ \varphi \in \diffh{}{n} : f \circ \varphi = f \}$ is a group
defined by algebraic equations on the coefficients of $\varphi$.
Hence $\varphi \in G$ implies $\varphi_{s} \in G$ and $\varphi_{u} \in G$.
This is a simplification since $\varphi_{s}$ is formally linearizable and
the properties of $\varphi_{u}$ can be interpreted on terms of the properties
of a formal vector field (the so called infinitesimal generator).
This perspective was used to study invariant and periodic (invariant for an iterate)
analytic and formal curves by elements of $\diff{}{2}$ \cite{michigan}.

Let us focus on the embedding problem for elements $\varphi$ of $\diffrh{}{n}$.
Consider a real linear vector field $X_{1} \in \Xrn{}{n}$ such that
$j^{1} \varphi = {\rm exp}(X_{1})$. We can suppose that $X_{1}$ is in
Jordan normal form. The linear operators $X_{1}$ and $j^{1} \varphi$
have eigenvalues
$\mu_{1}$, $\hdots$, $\mu_{n}$ and
$\lambda_{1}=e^{\mu_{1}}$, $\hdots$, $\lambda_{n}=e^{\mu_{n}}$
respectively.
Consider the Jordan additive (resp. multiplicative) decomposition
$X_{1,s} + X_{1,N}$ (resp. $(j^{1} \varphi)_{s} \circ (j^{1} \varphi)_{u}$)
of $X_{1}$ (resp. of $j^{1} \varphi$).
We say that a monomial $w_{1}^{a_{1}} \hdots w_{n}^{a_{n}} e_{j}$,
where $e_{j}$ is the $j$th element of the canonical base
of ${\mathbb C}^{n}$, is
\begin{itemize}
\item {\it resonant} if $\lambda_{1}^{a_{1}} \hdots \lambda_{n}^{a_{n}} \lambda_{j}^{-1} =1$.
\item {\it strongly resonant} if $a_{1} \mu_{1} + \hdots + a_{n} \mu_{n} - \mu_{j}=0$.
\item {\it weakly resonant} if $a_{1} \mu_{1} + \hdots + a_{n} \mu_{n} - \mu_{j} \in 2 \pi i {\mathbb Z}^{*}$.
\end{itemize}
Resonant implies either strongly or weakly resonant. A monomial
$w_{1}^{a_{1}} \hdots w_{n}^{a_{n}} e_{j}$ is resonant if and only if
it commutes with $(j^{1} \varphi)_{s}$.
Moreover
$w_{1}^{a_{1}} \hdots w_{n}^{a_{n}} e_{j}$ is strongly resonant if and only if
the Lie bracket
$[w_{1}^{a_{1}} \hdots w_{n}^{a_{n}} \partial / \partial {w_{j}}, X_{1,s}]$
is equal to $0$.
Resonances and strong resonances are resonances of the semisimple parts of
$j^{1} \varphi$ and $X_{1}$ respectively.
We say that the eigenvalues of $X_{1}$ are not weakly resonant if there is
no weakly resonant monomial of degree greater or equal than $2$.
In such a case both concepts of resonance coincide.
Zhang proves in this setting that
there is existence and uniqueness of the embedding flow.
\begin{teo}
\label{teo:stunique}
\cite{Zhang}
Let $\varphi \in \diffh{}{n}$
be a formal diffeomorphism.
Let $X_{1}$ be a linear vector field such that $j^{1} \varphi = {\rm exp}(X_{1})$
and whose eigenvalues are not weakly resonant.
Then there exists a unique
$\hat{X}$ in $\Xf{}{n}$ such that
$j^{1} \hat{X} = X_{1}$ and $\varphi = {\rm exp}(\hat{X})$.
Moreover $\hat{X}$ belongs to $\Xr{}{n}$ if
$\varphi \in \diffrh{}{n}$ and $X_{1} \in \Xrn{}{n}$.
\end{teo}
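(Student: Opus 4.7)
The plan is to exploit the Jordan--Chevalley decomposition throughout. I will write $\varphi = \varphi_s \circ \varphi_u = \varphi_u \circ \varphi_s$ in multiplicative Jordan form and, after a formal linearization, assume $\varphi_s$ equals the diagonal map $D = \exp(X_{1,s})$, where $X_{1,s} = \mathrm{diag}(\mu_1,\dots,\mu_n)$ is the semisimple part of $X_1$. The unipotent factor $\varphi_u$ admits a unique formal logarithm
\[
X_u := \log(\varphi_u) = (\varphi_u - \mathrm{Id}) - \tfrac{1}{2}(\varphi_u - \mathrm{Id})^2 + \cdots,
\]
convergent in the Krull topology precisely because $\varphi_u$ is unipotent. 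By construction $j^{1}X_u = X_{1,N}$ and $\exp(X_u) = \varphi_u$; and since $\varphi_s$ commutes with $\varphi_u$, functoriality of $\log$ on the unipotent group gives $\varphi_{s*}X_u = \log(\varphi_s\circ\varphi_u\circ\varphi_s^{-1}) = X_u$.

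The key structural step is the passage from $\varphi_s$-invariance of $X_u$ to commutation with $X_{1,s}$. In linearizing coordinates one has $D_{*}(z^{a}\partial/\partial z_j) = \lambda^{a}\lambda_j^{-1}\,z^{a}\partial/\partial z_j$ and $[X_{1,s}, z^{a}\partial/\partial z_j] = (a\!\cdot\!\mu - \mu_j)\,z^{a}\partial/\partial z_j$; so the monomials of $X_u$ are resonant, and the no-weak-resonance hypothesis upgrades resonant to strongly resonant. Thus $[X_{1,s},X_u] = 0$, and setting $\hat X := X_{1,s} + X_u$ one obtains $j^{1}\hat X = X_{1,s} + X_{1,N} = X_1$ and $\exp(\hat X) = \exp(X_{1,s})\circ\exp(X_u) = \varphi_s\circ\varphi_u = \varphi$.

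For uniqueness, given another candidate $\hat Y$ I will take its additive Jordan decomposition $\hat Y = Y_s + Y_N$. Using that commuting semisimple/nilpotent summands exponentiate to commuting semisimple/unipotent factors, together with uniqueness of the multiplicative Jordan decomposition of $\varphi$, I get $\exp(Y_s) = \varphi_s$ and $\exp(Y_N) = \varphi_u$. Uniqueness of the formal logarithm on the unipotent group then forces $Y_N = X_u$. Since the Jordan decomposition is compatible with first jets, $j^{1}Y_s = X_{1,s}$, so $Z := Y_s - X_{1,s}$ vanishes to order at least $2$; it commutes with $\varphi_s$, is therefore resonant, and by the hypothesis even strongly resonant, so $[X_{1,s},Z] = 0$ and $\exp(Z) = \mathrm{Id}$. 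Comparing lowest orders on both sides of $\exp(Z) - \mathrm{Id} = Z + Z^{2}/2 + \cdots$ forces $Z = 0$, hence $\hat Y = \hat X$. The real case is then free: if $\varphi$ and $X_1$ are real, complex conjugation sends any embedding flow satisfying the hypotheses to another one, and uniqueness yields $\overline{\hat X} = \hat X$.

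The main obstacle I expect is isolating the exact role of the no-weak-resonance hypothesis: it is what upgrades commutation with the semisimple \emph{diffeomorphism} $\varphi_s$ (automatic from Jordan--Chevalley) to commutation with the semisimple \emph{vector field} $X_{1,s}$, and this upgrade is needed in both halves of the argument (to exponentiate the sum $X_{1,s} + X_u$ as a product in existence, and to kill $Z$ in uniqueness). A secondary technical point I will have to verify is that the additive Jordan decomposition of a formal vector field commutes with the first-jet map, and that a single formal linearization of $\varphi_s$ can be used simultaneously for the existence and the uniqueness arguments.
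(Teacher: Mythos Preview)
Your proof is correct and follows the same Jordan--Chevalley strategy as the paper: linearize $\varphi_s$, use the no-weak-resonance hypothesis to upgrade ``commutes with $\varphi_s$'' to ``commutes with $X_{1,s}$'', and set $\hat X = X_{1,s} + \log\varphi_u$. The paper routes existence through Theorem~\ref{teo:embfor}, but the content is identical to what you wrote. Your uniqueness argument differs in a minor but genuine way: the paper linearizes $X_s$ and shows that the tangent-to-identity map $\eta$ conjugating $X_s$ to $Y_s$ must commute with $j^1\varphi_s$, hence is strongly resonant, hence fixes $X_{1,s}$; you instead stay in coordinates where $\varphi_s$ is diagonal and show directly that $Z = Y_s - X_{1,s}$ is $\varphi_s$-invariant, hence strongly resonant, hence $[X_{1,s},Z]=0$ and $\exp(Z)=\mathrm{Id}$ forces $Z=0$. (One could shorten your last step: once $[X_{1,s},Z]=0$ with $X_{1,s}$ semisimple and $Z$ nilpotent, $Y_s = X_{1,s}+Z$ is already a Jordan decomposition of the semisimple $Y_s$, so $Z=0$ by uniqueness.) Your real case is also handled differently: the paper builds $\hat X$ real from the start via the real linearization Lemma~\ref{lem:rldif}, whereas you deduce realness a posteriori from uniqueness by observing that complex conjugation sends one embedding flow with real $j^1$ to another. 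Both variants are valid; yours are marginally more economical.
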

The solution of the embedding flow problem
in the formal setting has implications in the $C^{\infty}$ setting.
The existence of an embedding flow for a hyperbolic element
$\varphi$ of
$\diffr{\infty}{n}$ is equivalent to the existence of an embedding flow
for its asymptotic development $\hat{\varphi} \in \diffrh{}{n}$ at the
origin by a theorem of Chen \cite{Chen}.
\begin{teo}
\label{teo:Zhang}
\cite{Zhang}
Let $\varphi \in {\rm Diff}_{\infty} ({\mathbb R}^{n},0)$
be a hyperbolic diffeomorphism.
Let $X_{1}$ be a linear real vector field such that $j^{1} \varphi = {\rm exp}(X_{1})$
and whose eigenvalues are not weakly resonant.
Then there exists
$X \in {\mathcal X}_{\infty} ({\mathbb R}^{n},0)$ such that
$j^{1} X = X_{1}$ and $\varphi = {\rm exp}(X)$.
\end{teo}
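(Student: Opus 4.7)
The plan is to reduce Theorem~\ref{teo:Zhang} to its formal counterpart Theorem~\ref{teo:stunique} via the theorem of Chen mentioned just before the statement. First I would pass to the asymptotic Taylor expansion $\hat{\varphi} \in \diffrh{}{n}$ of $\varphi$ at the origin. Then $\hat{\varphi}$ is a real formal diffeomorphism with $j^{1}\hat{\varphi} = j^{1}\varphi = \exp(X_{1})$, and its eigenvalues coincide with those of $\varphi$ and so are not weakly resonant. Applying Theorem~\ref{teo:stunique} to $\hat{\varphi}$ and $X_{1}$ yields a unique formal vector field $\hat{X} \in \Xf{}{n}$ with $j^{1}\hat{X} = X_{1}$ and $\exp(\hat{X}) = \hat{\varphi}$; since $X_{1}$ is real and $\hat{\varphi} \in \diffrh{}{n}$, the final clause of Theorem~\ref{teo:stunique} provides $\hat{X} \in \Xr{}{n}$.

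Next I would invoke Chen's theorem, which for hyperbolic $C^{\infty}$ diffeomorphisms reduces the existence of a $C^{\infty}$ embedding flow for $\varphi$ to the existence of a formal embedding flow for its asymptotic development. Since $\hat{X}$ provides the latter, Chen's theorem supplies some $X \in \Xrn{\infty}{n}$ with $\exp(X) = \varphi$. The Taylor expansion of $X$ at $0$ is itself a formal embedding flow of $\hat{\varphi}$, so its $1$-jet $X_{1}'$ satisfies $\exp(X_{1}') = \exp(X_{1})$; since $X_{1}'$ and $X_{1}$ then differ only by integer multiples of $2\pi i$ in their eigenvalues, a linear correction that commutes with the semisimple part of $X$ and exponentiates to the identity converts $X_{1}'$ into $X_{1}$ without changing $\exp(X)$. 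This produces a $C^{\infty}$ vector field with the prescribed $1$-jet $X_{1}$ whose time-one map is $\varphi$; uniqueness in Theorem~\ref{teo:stunique} then automatically identifies the Taylor expansion of the adjusted $X$ with $\hat{X}$.

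The main obstacle lies in Chen's realization step: it is what upgrades a formal embedding flow of $\hat{\varphi}$ into a genuine smooth vector field whose time-one flow is \emph{equal} to $\varphi$, and not merely a smooth diffeomorphism with the same Taylor expansion at the origin. The hyperbolicity hypothesis enters precisely here, since smooth conjugation at a hyperbolic fixed point is controlled by the formal invariants at that point. Once Chen's theorem is accepted as a black box, Theorem~\ref{teo:Zhang} is the direct smooth translation of the formal result Theorem~\ref{teo:stunique}, with the non-weak-resonance condition being exactly what guarantees that the formal logarithm $\hat{X}$ exists and is uniquely determined by $X_{1}$.
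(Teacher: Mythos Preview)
Your overall strategy --- pass to the Taylor series $\hat{\varphi}$, apply Theorem~\ref{teo:stunique} to obtain $\hat{X}\in\Xr{}{n}$, then use Chen's theorem to upgrade to a smooth embedding --- is exactly the route the paper takes. The paper makes the Chen step explicit by realizing $\hat{X}$ as a smooth vector field $X'$ (Borel), observing that $\varphi$ and $\exp(X')$ share the same Taylor series and are therefore formally conjugate by the identity, and then setting $X=\upsilon^{*}X'$ for the smooth conjugacy $\upsilon$ supplied by Chen.

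The genuine gap is your ``linear correction'' for the $1$-jet. Once Chen hands you some smooth $X$ with $\exp(X)=\varphi$ and $j^{1}X=X_{1}'$, you propose to add $L=X_{1}-X_{1}'$ and keep $\exp(X+L)=\exp(X)$. But $\exp(X_{1})=\exp(X_{1}')$ does \emph{not} force $\exp(X_{1}-X_{1}')=\mathrm{Id}$ unless $X_{1}$ and $X_{1}'$ commute, and two real logarithms of the same real matrix need not commute (nor need they share eigenvectors, so ``differ only in the eigenvalues by multiples of $2\pi i$'' is not a description of $L$). Moreover, commuting with the semisimple part is not enough: you would need $[L,X]=0$ for the full smooth vector field to conclude $\exp(X+L)=\exp(X)\exp(L)$, and nothing in your argument provides that. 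The clean fix is to use Chen's theorem in its realization form: the smooth conjugacy $\upsilon$ can be chosen with Taylor expansion equal to the given formal conjugacy, here the identity. Then $X=\upsilon^{*}X'$ has Taylor series $\hat{X}$, hence $j^{1}X=X_{1}$ directly, and no after-the-fact correction is needed.
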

The proof of theorem \ref{teo:stunique}
is obtained by doing an inductive process of
calculations in the jet level.
We introduce a simpler and much more conceptual
proof by using the Jordan-Chevalley
decomposition of vector fields and diffeomorphisms and the
properties of the exponential operator.
Calculations are almost no longer required since they
are encapsulated in the linearization of the
semisimple parts.
Zhang shows that the formal diffeomorphism
can be considered in normal form up to a formal
change of coordinates and then calculates the embedding flow.
The first step can be achieved directly by linearizing the semisimple
part of the formal diffeomorphism. Then it is easy to check out that
$X_{1,s} + \log \varphi_{u}$ is the expression of the embedding flow.
The formal vector field $\log \varphi_{u} \in \Xf{}{n}$ is the
infinitesimal generator of
$\varphi_{u}$, i.e. the unique nilpotent vector field such that
$\varphi_{u} = {\rm exp}(\log \varphi_{u})$.
%
%
%
%

We compare the concepts of embedding flow in $\Xr{}{n}$ and $\Xf{}{n}$
for formal diffeomorphisms in $\diffrh{}{n}$.
They coincide if the linear part of the embedding flow is required to be real.
\begin{teo}
\label{teo:cir}
Let $\varphi \in \diffrh{}{n}$. Suppose that
$\varphi$ is of the form ${\rm exp}(\hat{X})$ for some
$\hat{X} \in \Xf{}{n}$ with $j^{1} \hat{X} \in \Xrn{}{n}$.
Then there exists $\hat{Y} \in  \Xr{}{n}$
such that $\varphi = {\rm exp}(\hat{Y})$ and $j^{1} \hat{Y} = j^{1} \hat{X}$.
\end{teo}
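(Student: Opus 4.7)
My plan is to reduce, via the Jordan--Chevalley decomposition, to producing a real semisimple logarithm of $\varphi_{s}$ with prescribed first jet, and then to construct such a logarithm by real formal linearization followed by a Galois descent argument.

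First I apply the multiplicative and additive Jordan--Chevalley decompositions to $\varphi$ and $\hat{X}$ respectively. Since $\diffrh{}{n}$ is an algebraic subgroup of $\diffh{}{n}$ containing $\varphi$, the JC decomposition of $\varphi$ lives in $\diffrh{}{n}$, so $\varphi=\varphi_{s}\circ\varphi_{u}$ with both factors real. Analogously $\hat{X}=\hat{X}_{s}+\hat{X}_{N}$ with $[\hat{X}_{s},\hat{X}_{N}]=0$, $\exp(\hat{X}_{s})=\varphi_{s}$, $\exp(\hat{X}_{N})=\varphi_{u}$, and $j^{1}\hat{X}_{s}$, $j^{1}\hat{X}_{N}$ are the semisimple and nilpotent parts of the real linear operator $j^{1}\hat{X}$, hence real. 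The nilpotent summand is then automatically real: $\hat{X}_{N}$ must coincide with the infinitesimal generator $\log\varphi_{u}$ by uniqueness of the nilpotent logarithm of a unipotent element, and $\log\varphi_{u}$ is real because $\varphi_{u}$ is. Thus it suffices to produce a real semisimple $\hat{Y}_{s}$ with $\exp(\hat{Y}_{s})=\varphi_{s}$, $j^{1}\hat{Y}_{s}=j^{1}\hat{X}_{s}$, and $[\hat{Y}_{s},\log\varphi_{u}]=0$, and to put $\hat{Y}:=\hat{Y}_{s}+\log\varphi_{u}$.

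To construct $\hat{Y}_{s}$ I first real-linearize $\varphi_{s}$: being real and formally semisimple, $\varphi_{s}$ is conjugate by some $\hat{h}\in\diffrh{}{n}$ to a real linear semisimple operator $L$ in real Jordan form. Setting $T:=(j^{1}\hat{h})^{-1}(j^{1}\hat{X}_{s})(j^{1}\hat{h})$ (real linear with $\exp T=L$) and $N:=\hat{h}^{*}\log\varphi_{u}$ (real, nilpotent, $L$-invariant), the complex pushforward $\tilde{X}_{s}:=\hat{h}^{*}\hat{X}_{s}$ is a semisimple vector field with $j^{1}\tilde{X}_{s}=T$, $\exp(\tilde{X}_{s})=L$, and $[\tilde{X}_{s},N]=0$. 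Since $\tilde{X}_{s}$ is semisimple with first jet $T$, one may write $\tilde{X}_{s}=\hat{g}_{*}T$ for some $\hat{g}$ in the $L$-centralizer with $j^{1}\hat{g}=I$, and the commutation with $N$ then amounts to $\hat{g}^{-1}_{*}N$ commuting with $T$.

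The main obstacle is to choose this $\hat{g}$ real, because then $\hat{Y}_{s}:=(\hat{h}\circ\hat{g})_{*}T$ meets every requirement. I plan to handle this by Galois descent: the group of formal diffeomorphisms commuting with $L$ and with identity linear part is pro-unipotent in the jet-order filtration, and its first Galois cohomology with respect to complex conjugation vanishes, so the existence of a complex solution yields a real one. Concretely one constructs $\hat{g}$ inductively on the jet order; at each degree the equation for the next coefficient is linear with real coefficients depending only on $T$, $L$, and $N$, the existence of $\tilde{X}_{s}$ guarantees $\mathbb{C}$-solvability, and reality of the coefficients then produces $\mathbb{R}$-solvability. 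This is where the hypothesis that $j^{1}\hat{X}$ is real enters essentially, through the reality of $T$.
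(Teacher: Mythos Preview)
Your reduction via Jordan--Chevalley is correct and matches the paper: both observe that $\hat X_N = \log\varphi_u$ is automatically real and that the task reduces to producing a real semisimple $\hat Y_s$ with $\exp(\hat Y_s) = \varphi_s$, $j^1\hat Y_s = j^1 \hat X_s$, and $[\hat Y_s, \hat X_N]=0$. But the paper constructs $\hat Y_s$ without any linearization or descent: it simply sets $\alpha := \operatorname{Re}(\hat X_s)$ and takes $\hat Y_s := \alpha_s$, the semisimple part of $\alpha$. One checks that $[\alpha, \hat X_N]=0$ (from $[\hat X_s,\hat X_N]=0$ and reality of $\hat X_N$), hence $[\alpha_s,\hat X_N]=0$ by uniqueness of the Jordan--Chevalley decomposition under the flow of $\hat X_N$; that $j^1\alpha_s = j^1\hat X_s$ since $j^1\hat X_s$ is real and semisimple; and that $\varphi_s$ commutes with $\exp(\alpha_s)$ (via $\varphi_s^*\alpha = \alpha$ and again JC uniqueness), so that $\varphi_s = \exp(\alpha_s)\circ\eta$ with $\eta$ unipotent is a second Jordan--Chevalley decomposition of the semisimple $\varphi_s$, forcing $\eta=\mathrm{Id}$.

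Your linearize-then-descend route, by contrast, has a genuine gap in the last paragraph. The equations for $g_k$ are indeed affine-linear with real coefficients, but the inhomogeneous term depends on the already-chosen real $g_2,\dots,g_{k-1}$, not on the complex data coming from $\tilde X_s$. The existence of $\tilde X_s$ tells you only that with \emph{its} lower-order data the system at level $k$ is solvable; once you swap in your inductively built real data, you have not shown the new right-hand side still lies in the image of the fixed linear operator. Your $H^1$-vanishing heuristic does not close this gap either: vanishing of $H^1(\mathrm{Gal}(\mathbb{C}/\mathbb{R}),G)$ for the pro-unipotent $L$-centralizer $G$ would produce a real point in any $G$-torsor, but the subset $\{\hat g\in G : [\hat g_*T,N]=0\}$ is not a $G$-torsor, and you have not identified a smaller pro-unipotent group (defined over $\mathbb{R}$) under which it is one. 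The paper's one-line choice $\hat Y_s = (\operatorname{Re}\hat X_s)_s$ sidesteps all of this.
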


We characterize the diffeomorphisms in $\diffrh{}{n}$ and $\diffh{}{n}$
having an embedding flow via a normal form theorem.
\begin{teo}
\label{teo:embfor}
Let $\varphi \in \diffrh{}{n}$ (resp. $\diffh{}{n}$). Let $X_{1}$
be a linear  element of $\Xrn{}{n}$ (resp. $\Xn{}{n}$) such that
$X_{1,s}$ is diagonal and $j^{1} \varphi = {\rm exp}(X_{1})$.
Then $\varphi$ is embedded
in a formal flow $X$ with $j^{1} X = X_{1}$
if and only if there exists a tangent to the identity
$\eta \in \diffrh{}{n}$ (resp. $\diffh{}{n}$)
such that $\eta^{\circ (-1)} \circ \varphi \circ \eta$ is strongly resonant
(with respect to $X_{1,s}$).
\end{teo}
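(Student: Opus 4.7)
The plan is to identify strong resonance with a commutation condition and then apply Jordan--Chevalley inside the resulting centralizer. A formal diffeomorphism is strongly resonant with respect to $X_{1,s}$ if and only if it commutes with $\exp(tX_{1,s})$ for every $t\in\mathbb C$, equivalently it lies in the centralizer $Z$ of the one-parameter subgroup $\{\exp(tX_{1,s})\}_{t\in\mathbb C}$ (in the eigenbasis of $X_{1,s}$, commutation at every $t$ forces $a\cdot\mu=\mu_j$ on each nonlinear monomial $w^{a}e_{j}$). The set $Z$ is cut out by linear-algebraic equations on the Taylor coefficients of $\phi$, so $Z$ is a pro-algebraic subgroup of $\diffh{}{n}$ stable under the multiplicative Jordan--Chevalley decomposition: $\psi\in Z$ forces $\psi_{s},\psi_{u}\in Z$. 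I also use the standard fact that a semisimple formal vector field with diagonal linear part $X_{1,s}$ is formally conjugate to $X_{1,s}$ by a tangent-to-identity diffeomorphism.

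For $(\Rightarrow)$, given $\varphi = \exp(\hat X)$ with $j^{1}\hat X = X_{1}$, I decompose additively $\hat X = \hat X_{s} + \hat X_{N}$ and choose $\eta$ tangent to identity linearizing $\hat X_{s}$ to $X_{1,s}$. Then $\eta^{-1}\circ\varphi\circ\eta = \exp(X_{1,s}+Y)$ with $Y$ nilpotent, $j^{1}Y = X_{1,N}$, and $[X_{1,s},Y]=0$; by the commutation this factors as $\exp(X_{1,s})\circ\exp(Y)$, and since both factors commute with $X_{1,s}$ the conjugate lies in $Z$ and is therefore strongly resonant.

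For $(\Leftarrow)$, set $\psi := \eta^{-1}\circ\varphi\circ\eta \in Z$ and decompose $\psi = \psi_{s}\circ\psi_{u}$ with both parts in $Z$. The crucial step is to prove $\psi_{s} = \exp(X_{1,s})$: formally linearizing the semisimple $\psi_{s}$ to $L := \exp(X_{1,s})$ reduces at each degree $k\ge 2$ to a cohomological equation $g_{k}\circ L - L\circ g_{k} = h_{k} + (\text{lower-order correction})$, where $h_{k}$ is the degree-$k$ nonlinear term of $\psi_{s}$. Because $\psi_{s}\in Z$ every $h_{k}$ is strongly resonant, hence resonant, hence in the kernel of this operator; semisimplicity of $\psi_{s}$ demands solvability at every step, forcing $h_{k}=0$ inductively and so $\psi_{s}=L$. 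The unipotent $\psi_{u}\in Z$ has a unique nilpotent infinitesimal generator $Y_{u}:=\log\psi_{u}$ with $j^{1}Y_{u}=X_{1,N}$ and $[X_{1,s},Y_{u}]=0$, so $X := X_{1,s}+Y_{u}$ satisfies $j^{1}X=X_{1}$ and, by the commutation, $\exp(X) = \exp(X_{1,s})\circ\exp(Y_{u}) = \psi_{s}\circ\psi_{u} = \psi$; hence $\varphi=\exp(\eta_{*}X)$ has linear part $X_{1}$. The real case requires no extra work: Jordan--Chevalley, the linearization of $\hat X_{s}$ and the unipotent logarithm are canonical and commute with complex conjugation, so the output stays in $\diffrh{}{n}$ and $\Xr{}{n}$ (alternatively one may invoke Theorem \ref{teo:cir}). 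The main technical point is the identity $\psi_{s} = \exp(X_{1,s})$, which rests on the inclusion ``strongly resonant $\Rightarrow$ resonant'' placing the cohomological obstruction exactly where semisimplicity forbids it.
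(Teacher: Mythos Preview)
Your proof is correct and follows essentially the same strategy as the paper: for $(\Rightarrow)$ linearize the semisimple part of the embedding vector field, and for $(\Leftarrow)$ use the Jordan--Chevalley decomposition of the strongly resonant conjugate together with $[X_{1,s},\log(\text{unipotent part})]=0$ to build $X = X_{1,s} + \log(\text{unipotent part})$.

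The only notable difference is in how you establish $\psi_{s}=\exp(X_{1,s})$ in the $(\Leftarrow)$ direction. You pass through the pro-algebraic stability of the centralizer $Z$ and a cohomological/linearization argument (semisimplicity forces the resonant obstruction to vanish degree by degree). The paper is more direct: it simply observes that $\phi := \exp(-X_{1,s})\circ\tilde\varphi$ is unipotent (since $j^{1}\phi = j^{1}\varphi_{u}$) and commutes with $\exp(X_{1,s})$ (since $\tilde\varphi$ is strongly resonant, hence resonant), so $\tilde\varphi = \exp(X_{1,s})\circ\phi$ \emph{is} the Jordan--Chevalley decomposition, giving $\tilde\varphi_{s}=\exp(X_{1,s})$ and $\tilde\varphi_{u}=\phi$ at once. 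Your route works, but the paper's shortcut avoids the inductive cohomological step entirely.
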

Let us remind that $\eta \in \diffh{}{n}$ is tangent to the identity if
$j^{1} \eta = Id$. By definition a formal diffeomorphism is strongly
resonant if all its non-vanishing monomials are strongly resonant.

The first examples of hyperbolic diffeomorphisms
$\varphi = Az + O(|z|^{2}) \in \diffr{\infty}{n}$ (resp. $\diffr{}{n}$),
such that $A$ has a real logarithm  $B$,
without embedding vector fields are provided by Zhang \cite{Zhang}.
He conjectures that if $\varphi$
has a non-vanishing weakly resonant monomial then it can not
be embedded in a $C^{\infty}$ (resp. real analytic flow).
We provide a method to build counterexamples to the conjecture
given by resonant diffeomorphisms.
We single out two counterexamples that are particularly relevant.
Each of them implies that an extra condition
should be added to obtain a positive result.
Then we prove that in the new setup the conjecture is true.
As a consequence our examples can be considered as a
classification of the type of counterexamples to the original conjecture.
\begin{teo}
\label{teo:conj}
Let $A \in GL(n,{\mathbb C})$ be a matrix. Let $X_{1}$ be
a logarithm of $A$ such that $X_{1,s}$ is diagonal and
$[X_{1,N}, Y]=0$ for any weakly resonant vector field $Y$.
Consider $\varphi = Az + f_{2} + \hdots + f_{k} + \hdots$ in $\diffr{\infty}{n}$
(resp. $\diffr{}{n}$, $\diffh{}{n}$). Suppose that $\varphi$ satisfies one of the following
conditions:
\begin{itemize}
\item[(a)] $f_{2}=\hdots=f_{k-1}=0$ and $f_{k}$ contains
non-vanishing weakly resonant monomials.
\item[(b)] $Az + f_{2} + \hdots + f_{k-1}$ is strongly resonant,
there is no weakly resonant monomial of degree $2 \leq d \leq k-1$ and
$f_{k}$ contains non-vanishing weakly resonant monomials.
\end{itemize}
Then $\varphi$ is non-embeddable in the
flow of a vector field $X \in \Xrn{\infty}{n}$
(resp. $\Xrn{}{n}$, $\Xf{}{n}$)
such that $j^{1} X = X_{1}$.
\end{teo}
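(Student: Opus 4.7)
The plan is to argue by contradiction, treating the formal case and deducing the $C^{\infty}$ and real-analytic cases from it via Chen's theorem (for smooth) and direct Taylor expansion (for analytic), both of which reduce embedding in a smooth or analytic flow to embedding in a formal flow. Suppose $\varphi = \exp(\hat{X})$ formally with $j^{1}\hat{X} = X_{1}$. By Theorem \ref{teo:embfor} there is a tangent-to-identity $\eta \in \diffh{}{n}$ so that $\psi := \eta^{\circ(-1)} \circ \varphi \circ \eta$ is strongly resonant with respect to $X_{1,s}$. Writing $\eta = Id + \eta_{2} + \eta_{3} + \cdots$ and matching degree $j$ in $\varphi \circ \eta = \eta \circ \psi$ yields
\[
\psi_{j} = f_{j} + L(\eta_{j}) + R_{j}(\eta_{2}, \ldots, \eta_{j-1}; f_{2}, \ldots, f_{j-1}), \qquad L(Y)(z) := AY(z) - Y(Az),
\]
with $R_{j}$ a universal polynomial that does not involve $\eta_{j}$.

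The key lemma is: under the hypothesis $[X_{1,N}, Y] = 0$ for every weakly resonant $Y$, the decomposition $V_{j} = V_{j}^{R} \oplus V_{j}^{W} \oplus V_{j}^{N}$ of degree-$j$ vector fields into strongly resonant, weakly resonant, and non-resonant components is preserved by $\operatorname{ad}_{X_{1}}$, by $(e^{\operatorname{ad}_{X_{1}}}-1)/\operatorname{ad}_{X_{1}}$, by the matrix multiplication $Y \mapsto AY$, and by $L$; furthermore $L|_{V_{j}^{W}} = 0$. The $\operatorname{ad}_{X_{1}}$-invariance comes from $[\operatorname{ad}_{X_{1,s}}, \operatorname{ad}_{X_{1,N}}]=0$, so $\operatorname{ad}_{X_{1,N}}$ stabilizes every $\operatorname{ad}_{X_{1,s}}$-eigenspace; the invariance under $Y \mapsto AY$ comes from the block-diagonal structure of $A$ with respect to the partition of $\{1, \ldots, n\}$ by equal $\mu_{i}$ (forced by the Jordan form with $X_{1,s}$ diagonal); the vanishing on $V_{j}^{W}$ uses that the hypothesis forces $X_{1,N}(z^{a})=0$ and $X_{1,N}e_{l}=0$ for every weakly resonant $z^{a}e_{l}$, yielding $A e_{l} = \lambda_{l} e_{l}$ and $(Az)^{a} = \lambda_{l} z^{a}$, whence $L(z^{a} e_{l}) = 0$.

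With the lemma, strong resonance of $\psi_{k}$ forces $0 = \psi_{k}^{W} = f_{k}^{W} + L(\eta_{k})^{W} + R_{k}^{W} = f_{k}^{W} + R_{k}^{W}$. It remains to prove $R_{k}^{W} = 0$ under each of the two hypotheses. For case (b), an induction on $j$ using the strong resonance of the $\psi_{j}$ and the absence of weakly resonant monomials in $f_{j}$ for $j<k$ shows that each $\eta_{j}$ can be taken in $V_{j}^{R} \oplus V_{j}^{W}$ (since $L$ is invertible on $V_{j}^{N}$), and the bracket rules $[V_{a}^{R}, V_{b}^{R}] \subseteq V_{a+b-1}^{R}$, combined with the absence of weakly resonant $f_{j}$, confine $R_{k}$ to $V_{k}^{R} \oplus V_{k}^{N}$. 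For case (a), working directly with the expansion $\hat{X} = X_{1} + \sum g_{j}$, the condition $f_{j}=0$ for $j<k$ is equivalent (via the injectivity of $(e^{\operatorname{ad}_{X_{1}}}-1)/\operatorname{ad}_{X_{1}}$ on $V_{j}^{R} \oplus V_{j}^{N}$) to $g_{j} \in V_{j}^{W}$; the Magnus expansion of $\exp(\hat{X})$ at degree $k$ expresses $f_{k}$ as $A \cdot \frac{e^{\operatorname{ad}_{X_{1}}}-1}{\operatorname{ad}_{X_{1}}}(g_{k})$ plus iterated brackets of the lower $g_{j}$ integrated against time-simplex exponentials, and the identity $\int_{0}^{1} e^{2\pi i m s}\, ds = 0$ for $m \in \mathbb{Z}^{*}$ kills every weakly resonant contribution. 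Either way $f_{k}^{W} = 0$, contradicting the hypothesis on $f_{k}$.

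The main obstacle will be the combinatorial bookkeeping in case (a): verifying that every iterated bracket of weakly resonant vector fields occurring in the Magnus expansion, after integration over its time simplex, produces either zero or an element of $V_{k}^{R} \oplus V_{k}^{N}$ rather than $V_{k}^{W}$. The hypothesis $[X_{1,N},Y]=0$ on weakly resonant $Y$ is essential here: it makes $\operatorname{ad}_{X_{1}}$ act semisimply on each $V_{j}^{W}$ with eigenvalues in $2\pi i \mathbb{Z}^{*}$, which is precisely what makes the pertinent exponential integrals vanish and what ultimately allows the decomposition-preserving structure to propagate through every iterated bracket.
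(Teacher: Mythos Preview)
Your treatment of case (b) via Theorem \ref{teo:embfor} is sound and genuinely different from the paper's argument: once you know $V_{j}^{W}=0$ for $2\le j\le k-1$, the induction showing $\eta_{j}\in V_{j}^{R}$ goes through, $R_{k}\in V_{k}^{R}$, and $L|_{V^{W}}=0$ forces $f_{k}^{W}=0$. The paper instead works with the Jordan--Chevalley decomposition $\hat X=X_{s}+X_{N}$, shows $X_{s}^{2}=\cdots=X_{s}^{k-1}=0$ directly, and reads off a contradiction from the degree-$k$ part of $[X_{s},X_{N}]=0$. Your route trades that structural argument for an explicit cohomological-equation bookkeeping; both are clean in case (b).

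Case (a), however, has a real gap. Your claim that ``$f_{j}=0$ for $j<k$ is equivalent to $g_{j}\in V_{j}^{W}$'' is false. Already at $j=3$: with $g_{2}\in V_{2}^{W}$ a sum of eigenvectors with eigenvalues $\nu_{i}\in 2\pi i\mathbb Z^{*}$, the quadratic contribution $T_{3}$ to $f_{3}$ is a sum over pairs $(i,j)$ of terms proportional to $\int_{0}^{1}\!\!\int_{0}^{s} e^{\nu_{i}s}e^{\nu_{j}r}\,dr\,ds\cdot DY_{i}\,Y_{j}$. When $\nu_{i}+\nu_{j}=0$ this simplex integral equals $1/\nu_{j}\neq 0$, and $DY_{i}\,Y_{j}$ lies in $V_{3}^{R}$ (eigenvalue $\nu_{i}+\nu_{j}=0$). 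Hence $T_{3}$ is a nonzero element of $V_{3}^{R}$ in general, and $f_{3}=0$ forces $g_{3}$ to have a nonzero $V_{3}^{R}$-component. The single-variable identity $\int_{0}^{1}e^{2\pi i m s}\,ds=0$ does \emph{not} kill iterated simplex integrals, so your stated mechanism for case (a) does not work. Once the $g_{j}$ acquire strongly resonant pieces, the later Magnus terms mix $V^{R}$ and $V^{W}$ in ways your outline does not control.

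The paper sidesteps this entirely by using the Jordan--Chevalley splitting $\hat X=X_{s}+X_{N}$ rather than the homogeneous expansion $X_{1}+\sum g_{j}$: from $j^{k-1}\varphi=Az$ one gets $j^{k-1}\varphi_{u}=j^{1}\varphi_{u}$, hence the homogeneous pieces $C_{2},\dots,C_{k-1}$ of $X_{N}=\log\varphi_{u}$ vanish \emph{exactly}, and the degree-$k$ component of $[X_{s},X_{N}]=0$ reduces to $[X_{1,s},C_{k}]+[X_{s}^{k},X_{1,N}]=0$ with no lower-order cross terms. The contradiction then follows in one line from $C_{k}^{W}\neq 0$ and the hypothesis on $X_{1,N}$. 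If you want to repair case (a) in your framework, the cleanest fix is to replace the Magnus expansion of $\hat X$ by this Jordan--Chevalley argument; trying to push the simplex-integral bookkeeping through looks substantially harder.
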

Let us clarify that $f_{j}$ is homogeneous of degree $j$ for $j \geq 2$.
Resonances are considered with respect to $X_{1,s}$.

The condition (a) is weaker than (b) if $f_{2}=\hdots=f_{k-1}=0$.
Otherwise no condition is stronger than the other.

Let us remark that for instance in conditions (a) and (b) of the previous theorem
the diffeomorphism $\varphi$ is not suppposed to be in normal form, or in
other words to be resonant. Moreover we do not suppose in condition (a)
that there are no weakly resonant monomials of degree $2 \leq d \leq k-1$,
or equivalently that there is uniqueness of the embedding flow until order
at most $k-1$.
\section{Real vector fields}
We introduce some useful concepts to study real $C^{\infty}$ or analytic
diffeomorphisms. They include real vector fields, the exponential operator,
the Jordan-Chevalley decomposition of diffeomorphisms, analysis of resonances
and linearization. The results in this section are classical and they are included
for the sake of completeness.
\begin{defi}
We say that $X$ is a formal vector vector field and we denote
$X \in \Xf{}{n}$ if $X$ is a derivation of the ${\mathbb C}$-algebra
${\mathfrak m}$ where ${\mathfrak m}$ is the maximal ideal of
${\mathbb C}[[z_{1},\hdots,z_{n}]]$.
We can express $X \in  \Xf{}{n}$ in the more usual notation
\[ X = X(z_{1}) \frac{\partial}{\partial z_{1}} + \hdots + X(z_{n}) \frac{\partial}{\partial z_{n}}  . \]
We say that $X$ is a holomorphic vector field if
$X({\mathfrak m}_{0}) \subset {\mathfrak m}_{0}$ where
${\mathfrak m}_{0}$ is the maximal ideal of ${\mathbb C}\{z_{1},\hdots,z_{n}\}$
We denote by  $\Xn{}{n}$ the set of local holomorphic vector fields in a neighborhood
of $0$ in ${\mathbb C}^{n}$.
\end{defi}
\begin{defi}
We denote by $\Xrn{\infty}{n}$  the set of $C^{\infty}$ singular vector fields defined in
a neighborhood of $0$ in ${\mathbb R}^{n}$.
We denote by $\diffr{\infty}{n}$  the set of $C^{\infty}$ diffeomorphisms defined in
a neighborhood of $0$ in ${\mathbb R}^{n}$.
\end{defi}
\begin{defi}
\label{def:real}
Let $X \in \Xf{}{n}$ be a formal vector field.
We say that $X$ is real if
\[ \sigma^{*} X = \sigma^{*}
\left( \frac{1}{2} \left( Re (X) - i Im (X) \right) \right) =
\frac{1}{2} \left( Re (X) + i Im (X) \right) \]
for
$\sigma(z_{1},\hdots,z_{n}) = (\overline{z_{1}}, \hdots, \overline{z_{n}})$.
We define $\Xr{}{n}$ the set of real formal vector fields.
We define $\Xrn{}{n}= \Xn{}{n} \cap \Xr{}{n}$.
\end{defi}
A good example is the vector field
$\partial / \partial z= (1/2)(\partial / \partial x - i \partial / \partial y)$.
We have $Re (\partial / \partial z) = \partial / \partial x$ and
$Im (\partial / \partial z) = \partial / \partial y$.
The complex conjugation
$\sigma = \overline{z}$ is of the form $\sigma(x,y) = (x,-y)$ in real coordinates
($z = x + i y$).  Then $\sigma$ preserves $Re (\partial / \partial z)$ whereas it conjugates
$Im(\partial / \partial z)$ and $-Im(\partial / \partial z)$.
The vector field $\partial / \partial z$ is real.
On the contrary
$i \partial / \partial z = (1/2) (\partial / \partial y + i \partial / \partial x)$
is not real since $Re(i \partial / \partial z)=\partial / \partial y$ is not preserved by $\sigma$.
The real vector field $Re (\partial / \partial z)$ preserves the real line ${\mathbb R}$
whereas $Re (i \partial / \partial z)$ does not.
The proof of the next lemma is straightforward and it is omitted.
\begin{lem}
Let $X \in \Xf{}{n}$. The following conditions are equivalent:
\begin{itemize}
\item $X$ is real.
\item $\sigma^{*} Re (X) = Re (X)$.
\item $\sigma^{*} Im (X) = -Im (X)$.
\item $X$ is of the form
\[  \left( \sum a_{j_{1}\dots j_{n}}^{1} z_{1}^{j_{1}} \hdots z_{n}^{j_{n}} \frac{\partial}{ \partial z_{1}}
\right) + \hdots +
\left( \sum a_{j_{1}\dots j_{n}}^{n} z_{1}^{j_{1}} \hdots z_{n}^{j_{n}} \frac{\partial}{\partial z_{n}}
\right) \]
with $a_{j_{1} \hdots j_{n}}^{k} \in {\mathbb R} \ \forall 1\leq k \leq n \ \forall 0 \leq j_{1},\hdots, j_{n}$.
\end{itemize}
\end{lem}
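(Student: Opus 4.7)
The plan is to translate the four conditions into statements about the coefficients of $X$ in the real basis $\{\partial/\partial x_j,\partial/\partial y_j\}$ (with $z_k = x_k + i y_k$) and verify the equivalences by direct computation; as the author indicates, this is essentially a change-of-basis exercise, which is why the proof is omitted.

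First I would make the real/imaginary decomposition explicit. Writing $X = \sum_{j=1}^{n} a_j(z)\,\partial/\partial z_j$ and $a_j = u_j + i v_j$ with $u_j, v_j \in \mathbb{R}[[x_1,y_1,\ldots,x_n,y_n]]$, the substitution $\partial/\partial z_j = \tfrac{1}{2}(\partial/\partial x_j - i\,\partial/\partial y_j)$ yields the decomposition $X = \tfrac{1}{2}(Re(X) - i\,Im(X))$ with
\[
Re(X) = \sum_{j}\bigl( u_j\,\partial/\partial x_j + v_j\,\partial/\partial y_j\bigr), \qquad
Im(X) = \sum_{j}\bigl( u_j\,\partial/\partial y_j - v_j\,\partial/\partial x_j\bigr),
\]
both of which are real in the sense that all their coefficients lie in $\mathbb{R}[[x,y]]$.

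For the equivalence (i) $\Leftrightarrow$ (ii) $\wedge$ (iii), I would apply $\sigma^*$ to the decomposition. Extending $\sigma^*$ $\mathbb{C}$-linearly and noting $\sigma^*(\partial/\partial x_j) = \partial/\partial x_j$ and $\sigma^*(\partial/\partial y_j) = -\partial/\partial y_j$, both $\sigma^*Re(X)$ and $\sigma^*Im(X)$ remain real. Hence $\sigma^* X = \tfrac{1}{2}(\sigma^*Re(X) - i\,\sigma^*Im(X))$, and the defining equation of realness becomes $\sigma^*Re(X) - i\,\sigma^*Im(X) = Re(X) + i\,Im(X)$. By uniqueness of the real/imaginary decomposition of a complex formal vector field, this splits simultaneously into $\sigma^*Re(X) = Re(X)$ and $\sigma^*Im(X) = -Im(X)$. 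For the equivalence of (ii) and (iii) alone, I would use the standard complex structure $J$ on $\mathbb{R}^{2n}$ defined by $J(\partial/\partial x_j) = \partial/\partial y_j$ and $J(\partial/\partial y_j) = -\partial/\partial x_j$: a short computation gives $Im(X) = J(Re(X))$ and $\sigma^* J = -J\sigma^*$, so $\sigma^*Im(X) = -J\sigma^*Re(X)$, which makes the two conditions equivalent since $J$ is invertible.

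Finally, for (ii) $\Leftrightarrow$ (iv), the equation $\sigma^* Re(X) = Re(X)$ unfolds componentwise to $u_j\circ\sigma = u_j$ and $v_j\circ\sigma = -v_j$ for every $j$. Together these give $a_j(\bar z) = \overline{a_j(z)}$, which upon expansion of $a_j$ as a power series in $z_1,\ldots,z_n$ is equivalent to all coefficients of $a_j$ being real, i.e.\ condition (iv). There is no real obstacle in the proof; the only mildly delicate point is the coordinatewise matching used in proving (i) $\Leftrightarrow$ (ii) $\wedge$ (iii), which rests on the uniqueness of the decomposition of a vector field into real and imaginary parts.
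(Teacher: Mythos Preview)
Your proof is correct. The paper omits the proof entirely as straightforward, and your direct computation in the real basis $\{\partial/\partial x_j,\partial/\partial y_j\}$, together with the observation $Im(X)=J\,Re(X)$ and $\sigma^* J = -J\sigma^*$ to link (ii) and (iii), carries out exactly the kind of change-of-basis exercise the author had in mind.
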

\begin{defi}
We say that $\varphi$ is a formal endomorphism and we denote
$\varphi \in  \ndf{}{n}$ if $\varphi$ is a ${\mathbb C}$-algebra
homomorphism of the maximal ideal of
${\mathbb C}[[z_{1},\hdots,z_{n}]]$.
We can express $\varphi \in  \ndf{}{n}$ in the more usual notation
\[ \varphi = (z_{1} \circ \varphi , \hdots, z_{n} \circ \varphi), \ \
z_{j} \circ \varphi \stackrel{def}{=} \varphi (z_{j}) \ {\rm for} \ 1 \leq j \leq n.
 \]
We say that $\varphi$ is a formal diffeomorphism if
$\varphi$ is an isomorphism.
We say that $\varphi$ is holomorphic if $\varphi({\mathfrak m}_{0}) \subset {\mathfrak m}_{0}$
where ${\mathfrak m}_{0}$ is the maximal ideal of ${\mathbb C}\{z_{1},\hdots,z_{n}\}$.
\end{defi}
\begin{defi}
We denote by  $\diffh{}{n}$ the set of formal diffeomorphisms.
We denote by  $\diff{}{n}$ the set of local holomorphic diffeomorphisms in a neighborhood
of $0$ in ${\mathbb C}^{n}$.
\end{defi}
\begin{defi}
\label{def:rend}
Let $\varphi \in \ndf{}{n}$.   If $\sigma \circ \varphi \circ \sigma = \varphi$
(see def. \ref{def:real})
we say that $\varphi$ is real.
We define $\diffrh{}{n}$ the set of real elements of $\diffh{}{n}$.
We define $\diffr{}{n}= \diff{}{n} \cap \diffrh{}{n}$.
A formal endomorphism $\varphi$ is real
if and only if the formal power series $z_{1} \circ \varphi$, $\hdots$, $z_{n} \circ \varphi$
have real coefficients.
\end{defi}
\begin{rem}
A formal endomorphism $\varphi = (\varphi_{1}, \hdots, \varphi_{n}) \in \ndf{}{n}$ is real
if and only if the formal vector field
$\sum_{j=1}^{n} \varphi_{j} \partial / \partial z_{j}$ is real.
\end{rem}
\subsection{Exponential operator}
Consider a vector field $X \in \Xrn{\infty}{n}$ (or $X \in \Xn{}{n}$).
We denote ${\rm exp}(tX)$ the flow of the vector field $X$, it is
the unique solution
of the differential equation
\[ \frac{\partial}{\partial{t}} {\rm exp}(tX) = X({\rm exp}(tX)) \]
with initial condition ${\rm exp}(0X)=Id$. We define the exponential
${\rm exp}(X)$ of $X$ as
${\rm exp}(1X)$. It is a $C^{\infty}$ local diffeomorphism if $X \in \Xrn{\infty}{n}$.
Moreover ${\rm exp}(X)$ is a holomorphic diffeomorphism if $X \in \Xn{}{n}$.

We can extend the definition of the exponential operator to formal
vector fields as an operator acting on formal power series.
Given  $\hat{X}$ in $\hat{\mathcal X} \cn{n}$ we define
\[
\begin{array}{rccc}
{\rm exp}(\hat{X}): & {\mathbb C}[[z_{1},\hdots,z_{n}]] & \to &
{\mathbb C}[[z_{1},\hdots,z_{n}]] \\
& g & \to & \sum_{j=0}^{\infty} \frac{\hat{X}^{\circ(j)}}{j!} (g)
\end{array}
\]
where $\hat{X}^{\circ(0)}(g)=g$ and $\hat{X}^{\circ(j+1)}(g) = \hat{X}(\hat{X}^{\circ(j)}(g))$
for $j \geq 0$. Notice again that we interpret a formal vector field as a derivation
on the ring of formal power series.
Both definitions of exponential coincide if $\hat{X}$ is
convergent, i.e. $({\rm exp}(\hat{X}))(g) = g \circ {\rm exp}(\hat{X})$ for any
$g \in {\mathbb C}[[z_{1},\hdots,z_{n}]]$. We have
\[ {\rm exp}(\hat{X})(z_{1},\hdots,z_{n}) =
\left(  \sum_{j=0}^{\infty} \frac{\hat{X}^{\circ(j)}}{j!} (z_{1}), \hdots,
 \sum_{j=0}^{\infty} \frac{\hat{X}^{\circ(j)}}{j!} (z_{n}) \right) \]
in the usual notation. Since the coefficients of the exponential series are real
the exponential of a real formal vector field is a real
formal diffeomorphism.
\begin{defi}
Let $\varphi \in \diffr{\infty}{n}$ (resp. $\diffr{}{n}$, $\diff{}{n}$, $\diffh{}{n}$). We say that
$\varphi$ is embedded in a $C^{\infty}$ flow (resp. real analytic , holomorphic, formal flow)
if there exists $X \in \Xrn{\infty}{n}$ (resp. $\Xrn{}{n}$, $\Xn{}{n}$, $\Xr{}{n}$)
such that $\varphi = {\rm exp}(X)$.
\end{defi}
\begin{defi}
Let $X \in \Xf{}{n}$. We say that $X$ is nilpotent if the first jet $j^{1} X$
of $X$ is nilpotent.
We denote by $\Xf{N}{n}$ the set of formal nilpotent vector fields.
\end{defi}
\begin{defi}
Let $\varphi \in \diffh{}{n}$. We say that $\varphi$ is unipotent if $j^{1} \varphi$ is unipotent,
i.e. $1$ is the unique eigenvalue of $j^{1} \varphi$.
We denote by $\diffh{u}{n}$ the set of formal unipotent diffeomorphisms.
\end{defi}
\begin{lem}
\label{lem:exp}
(see \cite{Ecalle}, \cite{MaRa:aen})
The mapping ${\rm exp}: \Xf{N}{n} \to \diffh{u}{n}$ is a bijection.
\end{lem}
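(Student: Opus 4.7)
The plan is to prove bijectivity by constructing an explicit inverse $\log : \diffh{u}{n} \to \Xf{N}{n}$ via the formal series $\log(1+t) = \sum_{k\geq 1} (-1)^{k+1} t^{k}/k$, and then verifying $\exp \circ \log = Id$ and $\log \circ \exp = Id$.

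First I would show that both $\exp$ and the candidate $\log$ are well-defined in the Krull topology. The key observation is that for any $X \in \Xf{N}{n}$, viewed as a derivation of ${\mathbb C}[[z_{1},\hdots,z_{n}]]$, one has $X({\mathfrak m}^{j}) \subset {\mathfrak m}^{j}$ for every $j \geq 1$ (since the coefficients of $X$ lie in ${\mathfrak m}$), and the induced action of $X$ on the finite dimensional graded piece ${\mathfrak m}^{j}/{\mathfrak m}^{j+1}$ depends only on $j^{1}X$ and is therefore nilpotent. Consequently the action of $X$ on the finite dimensional space ${\mathbb C}[[z_{1},\hdots,z_{n}]]/{\mathfrak m}^{k+1}$ is nilpotent, so $\sum_{j \geq 0} X^{\circ(j)}/j!$ reduces to a finite sum modulo ${\mathfrak m}^{k+1}$ for every $k$ and defines an element $\exp(X)$ of $\ndf{}{n}$. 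The same argument applies to $\Delta := \varphi^{*} - Id$ when $\varphi \in \diffh{u}{n}$, since $\Delta$ preserves the filtration and its action on ${\mathfrak m}^{j}/{\mathfrak m}^{j+1}$ is induced by the nilpotent operator $j^{1}\varphi - Id$.

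Next I would verify that $\exp(X)$ is an algebra automorphism of ${\mathbb C}[[z_{1},\hdots,z_{n}]]$ with unipotent linear part, so it belongs to $\diffh{u}{n}$. The Leibniz rule for $X$ together with the binomial identity give $X^{\circ(m)}(fg) = \sum_{k=0}^{m} \binom{m}{k} X^{\circ(k)}(f) X^{\circ(m-k)}(g)$, and summing with coefficients $1/m!$ yields $\exp(X)(fg) = \exp(X)(f)\exp(X)(g)$; this manipulation is legitimate because on each quotient ${\mathbb C}[[z]]/{\mathfrak m}^{k+1}$ the sums are finite. That $j^{1} \exp(X) = \exp(j^{1} X)$ follows by reduction modulo ${\mathfrak m}^{2}$, so the linear part is unipotent. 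Symmetrically, I would prove that $\log(\varphi) := \sum_{k\geq 1} (-1)^{k+1} \Delta^{k}/k$ is a derivation with nilpotent first jet. The derivation property uses the twisted Leibniz rule $\Delta(fg) = \Delta(f) \varphi^{*}(g) + f \Delta(g)$; by induction one obtains a formula for $\Delta^{k}(fg)$ which, after reshuffling the series for $\log(1 + \Delta)$, collapses into $\log(\varphi)(fg) = \log(\varphi)(f)\, g + f\, \log(\varphi)(g)$. This is the main technical point of the argument.

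Finally, to check that $\exp$ and $\log$ are mutually inverse, I would reduce modulo ${\mathfrak m}^{k+1}$ for arbitrary $k$. On the finite dimensional associative algebra ${\rm End}({\mathbb C}[[z]]/{\mathfrak m}^{k+1})$ both $X$ and $\Delta$ act nilpotently, so the classical formal power series identities $\exp(\log(1 + t)) = 1 + t$ and $\log(\exp(t)) = t$ in ${\mathbb Q}[[t]]$ specialize to genuine identities among nilpotent endomorphisms; this gives $\exp(\log(\varphi))^{*} = \varphi^{*}$ and $\log(\exp(X)) = X$ modulo ${\mathfrak m}^{k+1}$ for every $k$. Passing to the inverse limit in the Krull topology yields the bijection claimed in the lemma.
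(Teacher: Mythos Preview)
The paper does not give its own proof of this lemma: it is stated with a citation to \'{E}calle and to Martinet--Ramis, and only the formula for $\log$ is displayed immediately afterwards (equation~(\ref{equ:log})) without further justification. Your argument is therefore supplying what the paper deliberately outsources, and it is the standard one found in those references: interpret $\exp$ and $\log$ as the usual power series applied to operators that act nilpotently on each jet space ${\mathfrak m}/{\mathfrak m}^{k+1}$, check the algebra-homomorphism and derivation properties, and use the formal identities $\exp(\log(1+t))=1+t$, $\log(\exp t)=t$ on nilpotent endomorphisms to conclude.

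One remark on the step you flag as ``the main technical point'': the verification that $\log\varphi$ is a derivation via the twisted Leibniz rule for $\Delta$ is correct but a little laborious to write out. A shortcut, entirely in the spirit of the paper, is to use the algebraic-group framework introduced just after this lemma: on each ${\mathfrak m}/{\mathfrak m}^{k+1}$ the operator $\varphi_{k}$ lies in the algebraic group $D_{k}$, so its (matrix) logarithm lies in the Lie algebra $L_{k}$ of derivations; compatibility with the projections $\pi_{k}$ then gives a derivation of ${\mathbb C}[[z_{1},\hdots,z_{n}]]$. Either route is fine.
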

\begin{defi}
Let $\varphi \in \diffh{u}{n}$.
The unique nilpotent formal vector field $\log \varphi$ such that
$\varphi = {\rm exp}(\log \varphi)$ is called the infinitesimal generator
of $\varphi$.
\end{defi}
Let us consider $\varphi \in \diffh{u}{n}$ as an operator acting
on power series $Id + \Theta$. More precisely
$\Theta: {\mathbb C}[[z_{1},\hdots,z_{n}]]  \to {\mathbb C}[[z_{1},\hdots,z_{n}]]$
is defined by $\Theta (g) = g \circ \varphi -g$ for any $g \in {\mathbb C}[[z_{1},\hdots,z_{n}]] $.
We have
\begin{equation}
\label{equ:log}
 (\log \varphi)(g) = \log (Id + \Theta) (g)= \sum_{j=1}^{\infty} (-1)^{j+1} \frac{\Theta^{\circ (j)} (g)}{j}
\end{equation}
for any $g \in {\mathbb C}[[z_{1},\hdots,z_{n}]]$.
Since the coefficients of the power series $\log (1 + z)$ are real then
$\log$ associates real formal nilpotent vector fields to real formal unipotent
diffeomorphisms.
\subsection{Jordan-Chevalley decomposition}
Let us recall here some known results \cite{Can-Cer} \cite{MarJ} on the jordanization of
diffeomorphisms and vector fields.

Let ${\mathfrak m}$ the maximal ideal of
${\mathbb C}[[z_{1},\hdots,z_{n}]]$.
Any formal diffeomorphism $\varphi \in \diffh{}{n}$ acts on the
finite dimensional complex vector space ${\mathfrak m}/{\mathfrak m}^{k+1}$ of $k$-jets.
More precisely $\varphi$ defines an element
$\varphi_{k}$ of $GL({\mathfrak m}/{\mathfrak m}^{k+1})$ given by
\begin{equation}
\label{equ:actdif}
\begin{array}{ccc}
{\mathfrak m}/{\mathfrak m}^{k+1} &
\stackrel{\varphi_{k}}{\rightarrow} & {\mathfrak m}/{\mathfrak m}^{k+1} \\
g + {\mathfrak m}^{k+1}& \mapsto & g \circ \varphi + {\mathfrak m}^{k+1}
\end{array} .
\end{equation}
Analogously a formal vector field $X \in \hat{\mathcal X} \cn{n}$
defines an element $X_{k}$ of $GL({\mathfrak m}/{\mathfrak m}^{k+1})$ given by
\begin{equation}
\label{equ:actvf}
\begin{array}{ccc}
{\mathfrak m}/{\mathfrak m}^{k+1} &
\stackrel{X_{k}}{\rightarrow} & {\mathfrak m}/{\mathfrak m}^{k+1} \\
g + {\mathfrak m}^{k+1}& \mapsto & X(g) + {\mathfrak m}^{k+1}
\end{array} .
\end{equation}
Consider the group $D_{k} \subset GL({\mathfrak m}/{\mathfrak m}^{k+1})$
defined as
\[ D_{k} = \{ \alpha \in GL({\mathfrak m}/{\mathfrak m}^{k+1}) :
\alpha (g h) = \alpha (g) \alpha (h) \ \forall g,h \in  {\mathfrak m}/{\mathfrak m}^{k+1} \} .  \]
We define the Lie algebra $L_{k} \subset  End({\mathfrak m}/{\mathfrak m}^{k+1}) $ as
\[ L_{k} = \{ \gamma \in End({\mathfrak m}/{\mathfrak m}^{k+1}) :
\gamma (g h) = \gamma (g) h +  g \gamma (h)
\ \forall g,h \in  {\mathfrak m}/{\mathfrak m}^{k+1} \} .  \]
Any $\gamma \in End({\mathfrak m}/{\mathfrak m}^{k+1})$
admits a  unique additive Jordan decomposition
$\gamma = \gamma_{s} + \gamma_{N}$
where $\gamma_{s}$ is semisimple (or equivalently diagonalizable),
$\gamma_{N}$ is nilpotent and $[\gamma_{s}, \gamma_{N}] =0$.
If $\gamma$ is a derivation, i.e. $\gamma \in L_{k}$, then both
the semisimple and nilpotent parts $\gamma_{s}$ and $\gamma_{N}$
are derivations and belong to $L_{k}$ (see Lemma B, page 18 \cite{Humphreys-lie}).

The equations of the form $\alpha (g h) = \alpha (g) \alpha (h)$ are algebraic
in the coefficients of $\alpha \in GL({\mathfrak m}/{\mathfrak m}^{k+1}) $.
Thus $D_{k}$ is an algebraic group, indeed it is
the subgroup $\{ \varphi_{k} : \varphi \in \diffh{}{n} \}$ of actions on
${\mathfrak m}/{\mathfrak m}^{k+1}$ given by formal diffeomorphisms.
Moreover $\alpha$ admits a unique multiplicative Jordan decomposition
$\alpha = \alpha_{s} \circ \alpha_{u}$
where $\alpha_{s}$ is semisimple,
$\alpha_{u}$ is unipotent and $\alpha_{s} \circ \alpha_{u} = \alpha_{u} \circ \alpha_{s}$.
The Jordan-Chevalley decomposition in algebraic groups implies
$\alpha_{s}, \alpha_{u} \in D_{k}$ (see section 15.3, page 99 \cite{Humphreys}).

An element $\alpha$ of
$D_{k+1}$ satisfies
$\alpha ({\mathfrak m}^{k+1}/ {\mathfrak m}^{k+2}) = {\mathfrak m}^{k+1}/ {\mathfrak m}^{k+2}$.
Therefore $\alpha$ induces a unique element in $D_{k}$. In this way
we  define a morphism $\pi_{k} :D_{k+1}  \to D_{k}$ of algebraic groups.
It satisfies $\pi_{k} (\varphi_{k+1}) = \varphi_{k}$ for all
$\varphi \in \diffh{}{n}$ and $k \in {\mathbb N}$.
The Jordan decomposition is preserved by $\pi_{k}$. More precisely
we obtain $\pi_{k}(\varphi_{k+1,s}) = \varphi_{k,s}$ and
$\pi_{k}(\varphi_{k+1,u}) = \varphi_{k,u}$ for all
$\varphi \in \diffh{}{n}$ and $k \in {\mathbb N}$.
As a consequence there exist $\varphi_{s}, \varphi_{u} \in \diffh{}{n}$
such that $(\varphi_{s})_{k} = \varphi_{k,s}$ and  $(\varphi_{u})_{k} = \varphi_{k,u}$
for  all $\varphi \in \diffh{}{n}$ and $k \in {\mathbb N}$.

Since $(\varphi_{s})_{k}$ is diagonalizable for any $k \in {\mathbb N}$ it can be proved
that $\varphi_{s}$ is formally diagonalizable by induction on $k$
(see lemma \ref{lem:rldif}).
The formal diffeomorphism $\varphi_{u}$ satisfies that $(\varphi_{u})_{k}$ is unipotent
for any $k \in {\mathbb N}$. It is easy to see that this is equivalent to the
unipotency of $j^{1} \varphi_{u} = (\varphi_{u})_{1}$.
The next proposition summarizes the previous discussion.
\begin{pro}
Let $\varphi \in \diffh{}{n}$. Then there exist unique formal diffeomorphisms
$\varphi_{s}, \varphi_{u} \in \diffh{}{n}$ such that
$\varphi = \varphi_{s} \circ \varphi_{u} = \varphi_{u} \circ \varphi_{s}$, $\varphi_{s}$
is formally diagonalizable and $\varphi_{u}$ is unipotent.
\end{pro}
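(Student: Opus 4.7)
The plan is to assemble the decomposition from its finite-jet pieces, using the Jordan--Chevalley machinery in the algebraic groups $D_{k}$ already set up in the text. First, for each $k \geq 1$ the action $\varphi_{k}$ defined in (\ref{equ:actdif}) is an element of the algebraic group $D_{k} \subset GL({\mathfrak m}/{\mathfrak m}^{k+1})$, so the multiplicative Jordan--Chevalley decomposition in algebraic groups (\cite{Humphreys}, section 15.3) gives unique commuting elements $\varphi_{k,s}, \varphi_{k,u} \in D_{k}$ with $\varphi_{k} = \varphi_{k,s} \circ \varphi_{k,u}$, $\varphi_{k,s}$ semisimple and $\varphi_{k,u}$ unipotent.

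Next I would establish compatibility with the tower. Since $\pi_{k}: D_{k+1} \to D_{k}$ is a morphism of algebraic groups, it sends semisimple elements to semisimple elements and unipotent elements to unipotent elements. Therefore $(\pi_{k}(\varphi_{k+1,s}), \pi_{k}(\varphi_{k+1,u}))$ is a commuting semisimple--unipotent factorization of $\pi_{k}(\varphi_{k+1}) = \varphi_{k}$, and uniqueness of the decomposition in $D_{k}$ forces $\pi_{k}(\varphi_{k+1,s}) = \varphi_{k,s}$ and $\pi_{k}(\varphi_{k+1,u}) = \varphi_{k,u}$. With these coherence relations in hand, the families $\{\varphi_{k,s}\}_{k}$ and $\{\varphi_{k,u}\}_{k}$ patch together, via the inverse limit already described in the text, to produce formal endomorphisms $\varphi_{s}, \varphi_{u} \in \ndf{}{n}$ satisfying $(\varphi_{s})_{k} = \varphi_{k,s}$ and $(\varphi_{u})_{k} = \varphi_{k,u}$ for every $k$. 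The identities $\varphi = \varphi_{s} \circ \varphi_{u} = \varphi_{u} \circ \varphi_{s}$ hold by construction at every jet level and therefore hold formally.

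It remains to check structural properties and uniqueness. For $\varphi_{u}$: since $(\varphi_{u})_{1} = j^{1}\varphi_{u}$ is unipotent and (as noted in the text) unipotency of $\varphi_{u}$ is equivalent to unipotency of $j^{1}\varphi_{u}$, we conclude $\varphi_{u} \in \diffh{u}{n}$; in particular its linear part is invertible, so $\varphi_{u} \in \diffh{}{n}$. Formal diagonalizability of $\varphi_{s}$ is the one genuinely non-algebraic point, and I would defer it to the forthcoming lemma \ref{lem:rldif}, which manufactures a single formal coordinate change simultaneously diagonalizing all jets of $\varphi_{s}$ by an inductive construction on $k$; the inputs it needs are exactly what we have, namely that every $(\varphi_{s})_{k}$ is diagonalizable and the diagonalizations are compatible under $\pi_{k}$. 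Invertibility of $\varphi_{s}$ then follows from $\varphi_{s} = \varphi \circ \varphi_{u}^{\circ(-1)}$.

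Finally, for uniqueness, suppose $\varphi = \psi_{s} \circ \psi_{u} = \psi_{u} \circ \psi_{s}$ is another such decomposition with $\psi_{s}$ formally diagonalizable and $\psi_{u}$ unipotent. Passing to the $k$-jet action, $((\psi_{s})_{k}, (\psi_{u})_{k})$ is a commuting semisimple--unipotent factorization of $\varphi_{k}$ in $D_{k}$, so the uniqueness part of Jordan--Chevalley in $D_{k}$ yields $(\psi_{s})_{k} = \varphi_{k,s}$ and $(\psi_{u})_{k} = \varphi_{k,u}$ for every $k$, hence $\psi_{s} = \varphi_{s}$ and $\psi_{u} = \varphi_{u}$. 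The main obstacle is really only the passage from ``diagonalizable at every jet level'' to ``formally diagonalizable as an infinite-dimensional object'', which is where lemma \ref{lem:rldif} carries the weight; everything else is a transparent inverse-limit argument driven by the algebraic-group decomposition in each $D_{k}$.
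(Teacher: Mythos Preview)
Your proposal is correct and follows essentially the same approach as the paper: apply the multiplicative Jordan--Chevalley decomposition in each algebraic group $D_{k}$, use that the morphisms $\pi_{k}$ of algebraic groups preserve the decomposition, pass to the inverse limit, and defer formal diagonalizability of $\varphi_{s}$ to lemma \ref{lem:rldif}. Your write-up is in fact slightly more complete than the paper's discussion, since you spell out the uniqueness argument at the formal level and the invertibility of $\varphi_{s}$ and $\varphi_{u}$, both of which the paper leaves implicit.
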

The next result is the analogue for vector fields.
It is obtained by considering the additive Jordan decomposition.
\begin{pro}
Let $X \in \Xf{}{n}$. Then there exist unique formal vector fields
$X_{s}, X_{N} \in \Xf{}{n}$ such that
$X= X_{s} + X_{N}$, $[X_{s},X_{N}]=0$, $X_{s}$
is formally diagonalizable and $X_{N}$ is nilpotent.
\end{pro}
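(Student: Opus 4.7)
The plan is to mirror exactly the argument given above for the multiplicative Jordan decomposition of formal diffeomorphisms, replacing the algebraic group $D_{k}$ by the Lie algebra $L_{k}$ and the multiplicative Jordan decomposition by the additive one.

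First, for each $k \in \mathbb{N}$ the formal vector field $X$ acts on ${\mathfrak m}/{\mathfrak m}^{k+1}$ as a derivation, yielding an element $X_{k} \in L_{k}$ via (\ref{equ:actvf}). I apply the additive Jordan decomposition in $End({\mathfrak m}/{\mathfrak m}^{k+1})$ to obtain $X_{k} = X_{k,s} + X_{k,N}$ with $X_{k,s}$ semisimple, $X_{k,N}$ nilpotent, and $[X_{k,s},X_{k,N}]=0$. The crucial point, already invoked in the excerpt (Lemma B, page 18 of \cite{Humphreys-lie}), is that the semisimple and nilpotent parts of a derivation are themselves derivations, so $X_{k,s}, X_{k,N} \in L_{k}$.

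Next, the truncation map $\pi_{k} : L_{k+1} \to L_{k}$, defined for derivations in the same way as for diffeomorphisms (using that any $\gamma \in L_{k+1}$ preserves the filtration ${\mathfrak m}^{j}/{\mathfrak m}^{k+2}$), is a Lie algebra morphism. By uniqueness of the additive Jordan decomposition it respects the splitting, so $\pi_{k}(X_{k+1,s})=X_{k,s}$ and $\pi_{k}(X_{k+1,N})=X_{k,N}$. Passing to the inverse limit over $k$ produces unique formal vector fields $X_{s}, X_{N} \in \Xf{}{n}$ with $(X_{s})_{k}=X_{k,s}$ and $(X_{N})_{k}=X_{k,N}$ for every $k$. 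The identities $X = X_{s}+X_{N}$ and $[X_{s},X_{N}]=0$ hold after truncation at every order, hence in $\Xf{}{n}$. Nilpotency of $X_{N}$ reduces to nilpotency of $j^{1}X_{N}=(X_{N})_{1}$, which is immediate. Uniqueness is inherited from the uniqueness of the Jordan decomposition of each $X_{k}$.

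The main obstacle, exactly as in the multiplicative case, is upgrading the pointwise diagonalizability of $(X_{s})_{k}$ on every finite jet space to formal diagonalizability of $X_{s}$ (i.e.\ conjugacy to a diagonal linear vector field via some element of $\diffh{}{n}$). This is the content of the additive analogue of the forthcoming Lemma \ref{lem:rldif}, which is proved by induction on $k$: assuming a tangent-to-the-identity $\psi_{k}$ has been built so that $\psi_{k}^{\circ(-1)} \circ X_{s} \circ \psi_{k}$ is diagonal up to order $k$, one removes the degree-$(k+1)$ obstruction by composing with the time-one flow of a homogeneous vector field solving a linear homological equation. The equation is solvable because $X_{s,1}$ acts semisimply on the space of degree-$(k+1)$ vector fields, so every obstruction lies in its image. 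Taking the Krull limit of the $\psi_{k}$'s gives the desired conjugacy.
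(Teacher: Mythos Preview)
Your proposal is correct and follows exactly the route the paper intends: the paper states only that the vector field proposition ``is obtained by considering the additive Jordan decomposition'' in analogy with the diffeomorphism case, and you have supplied precisely that analogy --- working in $L_{k}$ instead of $D_{k}$, invoking Lemma~B of \cite{Humphreys-lie} to keep $X_{k,s}$ and $X_{k,N}$ inside $L_{k}$, and passing to the inverse limit. One small remark: the linearization result you need at the end is Lemma~\ref{lem:rlvf} (the vector field version) rather than Lemma~\ref{lem:rldif}; the paper proves both together, and the argument is indeed the inductive one you sketch, with the extra twist that the strongly resonant remainder at each step is shown to vanish using uniqueness of the Jordan decomposition of $(X_{s})_{k}$.
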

The following results are a direct consequence of the real nature of the Jordan
decomposition.
\begin{lem}
\label{lem:jdvfr}
Let $X \in \Xr{}{n}$.  Then
$X_{s}$ and $X_{N}$ are real.
\end{lem}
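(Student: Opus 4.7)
The plan is to exploit the uniqueness of the additive Jordan-Chevalley decomposition stated in the preceding proposition. The key observation is that ``being real'' is equivalent to being fixed by a natural involution on $\Xf{}{n}$, and this involution preserves the properties characterizing the Jordan components.

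First I would introduce the complex conjugation operator $\tau: \Xf{}{n} \to \Xf{}{n}$ that conjugates all coefficients: if $X = \sum_{k=1}^{n} \sum_{\alpha} a^{k}_{\alpha} z^{\alpha} \partial/\partial z_{k}$ then $\tau(X) = \sum_{k=1}^{n} \sum_{\alpha} \overline{a^{k}_{\alpha}} z^{\alpha} \partial/\partial z_{k}$. By the fourth equivalent condition of the previous lemma on real vector fields, $X$ is real if and only if $\tau(X) = X$. A direct computation shows that $\tau$ is compatible with the Lie bracket, namely $\tau([X, Y]) = [\tau(X), \tau(Y)]$: indeed $\tau$ extends to a ring automorphism of $\mathbb{C}[[z_1, \ldots, z_n]]$ fixing each $z_j$, and $\tau(X)\circ \tau = \tau \circ X$ as operators on formal power series, so composition, and hence brackets of derivations, transform correctly.

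Next I would verify that $\tau$ preserves the two defining properties of the Jordan components. Nilpotency of $j^{1} \tau(X_N) = \overline{j^{1} X_N}$ follows from the invariance of nilpotency under complex conjugation of matrices. Formal diagonalizability of $\tau(X_s)$ follows because if a formal change of coordinates $\phi \in \diffh{}{n}$ conjugates $X_s$ to a diagonal linear vector field $D$, then $\tau(\phi)$ conjugates $\tau(X_s)$ to $\tau(D)$, which remains diagonal linear.

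Finally, applying $\tau$ to the identity $X = X_s + X_N$ and using $\tau(X) = X$ yields $X = \tau(X_s) + \tau(X_N)$, with $[\tau(X_s), \tau(X_N)] = \tau([X_s, X_N]) = 0$, $\tau(X_s)$ formally diagonalizable, and $\tau(X_N)$ nilpotent. This is a Jordan decomposition of $X$, so the uniqueness part of the previous proposition gives $\tau(X_s) = X_s$ and $\tau(X_N) = X_N$, i.e.\ both are real. The only delicate point is the compatibility of $\tau$ with formal conjugation used in the diagonalizability step, but this reduces to the observation that $\tau$ is a ring automorphism of $\mathbb{C}[[z_1, \ldots, z_n]]$ fixing each variable, so no genuine calculation is required.
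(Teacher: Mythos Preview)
Your argument is correct and is precisely the content the paper leaves implicit: the paper does not give a proof but simply states that the lemma is ``a direct consequence of the real nature of the Jordan decomposition,'' and your use of the conjugation involution $\tau$ together with uniqueness of the Jordan--Chevalley decomposition is exactly what that phrase means. The only cosmetic difference is that the paper's setup would naturally phrase the same argument level by level on the finite-dimensional quotients ${\mathfrak m}/{\mathfrak m}^{k+1}$ (where the realness of the Jordan decomposition of a real linear operator is classical), whereas you work directly on $\Xf{}{n}$; both viewpoints are equivalent.
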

\begin{lem}
Let $\varphi \in \diffrh{}{n}$. Then
$\varphi_{s}$ and $\varphi_{u}$ are real.
\end{lem}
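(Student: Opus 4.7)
The plan is to apply the complex conjugation involution $\tau:\diffh{}{n}\to\diffh{}{n}$ defined by $\tau(\psi)=\sigma\circ\psi\circ\sigma$ to the Jordan--Chevalley decomposition $\varphi=\varphi_{s}\circ\varphi_{u}$ and then invoke uniqueness, mirroring the argument that proves the vector field analogue in Lemma~\ref{lem:jdvfr}. Since $\sigma\circ\sigma=Id$, the map $\tau$ is a group homomorphism of $\diffh{}{n}$, and by Definition~\ref{def:rend} a formal diffeomorphism is real exactly when it is a fixed point of $\tau$. Applying $\tau$ to the identity $\varphi=\varphi_{s}\circ\varphi_{u}=\varphi_{u}\circ\varphi_{s}$ and using $\tau(\varphi)=\varphi$ yields
\[
\varphi \;=\; \tau(\varphi_{s})\circ\tau(\varphi_{u}) \;=\; \tau(\varphi_{u})\circ\tau(\varphi_{s}),
\]
so the argument reduces to showing that $\tau(\varphi_{s})$ is formally diagonalizable and $\tau(\varphi_{u})$ is unipotent.

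For the semisimple factor, write $\varphi_{s}=\eta^{\circ(-1)}\circ D\circ\eta$ where $\eta\in\diffh{}{n}$ and $D(z_{1},\ldots,z_{n})=(\lambda_{1}z_{1},\ldots,\lambda_{n}z_{n})$ is a linear diagonal map. The homomorphism property of $\tau$ gives $\tau(\varphi_{s})=\tau(\eta)^{\circ(-1)}\circ\tau(D)\circ\tau(\eta)$, and a direct computation shows $\tau(D)(z_{1},\ldots,z_{n})=(\overline{\lambda_{1}}z_{1},\ldots,\overline{\lambda_{n}}z_{n})$, which is still linear diagonal; hence $\tau(\varphi_{s})$ is formally diagonalizable. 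For the unipotent factor, the matrix $j^{1}\tau(\varphi_{u})$ is the entrywise complex conjugate of $j^{1}\varphi_{u}$; since this operation sends the spectrum to its complex conjugate and $\overline{1}=1$, all eigenvalues remain equal to $1$, so $\tau(\varphi_{u})$ is unipotent.

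By uniqueness of the Jordan--Chevalley decomposition in $\diffh{}{n}$ (the Proposition just above), the identities $\tau(\varphi_{s})=\varphi_{s}$ and $\tau(\varphi_{u})=\varphi_{u}$ follow, which is precisely the assertion that $\varphi_{s}$ and $\varphi_{u}$ are real. There is no serious obstacle; the only point requiring a moment of care is the verification that $\tau$ preserves the classes of formally diagonalizable and unipotent diffeomorphisms, and this is transparent once the action of $\tau$ on coefficients has been unpacked.
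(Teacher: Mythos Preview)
Your proof is correct and is precisely the natural way to unpack the paper's one-line justification ``a direct consequence of the real nature of the Jordan decomposition.'' The paper does not spell out any argument, but the intended content is exactly the uniqueness trick you give: apply the conjugation involution and use that it preserves the semisimple/unipotent dichotomy.
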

\subsection{Real monomials}
\label{subsec:realmon}
Let $\varphi \in \diffrh{}{n}$. We can suppose that $j^{1} \varphi_{s}$
is a diagonal transformation in some coordinates
$(w_{1},\hdots,w_{n})$ of ${\mathbb C}^{n}$
up to a linear change of coordinates.
The components of the diffeomorphism $\varphi$
are not anymore real power series if there exists a complex non-real
eigenvalue of $j^{1} \varphi$. We are interested on working on
coordinates making $\varphi_{s}$ is as simple as possible.
As a consequence it is necessary to characterize the real nature
of endomorphisms and vector fields in the new coordinates.

Fix a real matrix $M = M_{s} + M_{N} \in End({\mathbb R}^{n})$
such that $M_{s}$ is in real Jordan normal form.
For instance such a property holds true if $M$ itself is in
Jordan normal form. The matrix $M_{s}$ is diagonalizable.
Its real Jordan
blocks are of the forms
\begin{equation}
\label{equ:jordan}
J =
\left(
\begin{array}{rr}
\lambda & -\mu \\
\mu & \lambda
\end{array}
\right) \ {\rm for} \ \lambda, \mu \in {\mathbb R} \ {\rm and} \
(\beta) \ {\rm for} \ \beta \in {\mathbb R}.
\end{equation}
Consider coordinates $(z_{p}, z_{p+1})$ in the former case.
The complex Jordan normal form is obtained by considering the
linear change of base
\begin{equation}
\label{equ:rtoc}
\left(
\begin{array}{c}
z_{p} \\
z_{p+1}
\end{array}
\right) =
\left(
\begin{array}{rr}
1 & 1 \\
-i & i
\end{array}
\right)
\left(
\begin{array}{c}
w_{p} \\
w_{p+1}
\end{array}
\right) .
\end{equation}
We define $\rho(p) = p+1$ and  $\rho(p+1) = p$.
If the block is of the form $(\beta)$
in a coordinate $z_{q}$ we define
$w_{q} = z_{q}$ and $\rho(q) =q$.
It is convenient to work in the coordinates $(w_{1},\hdots,w_{n})$ since
the matrix on the left-hand side of expression (\ref{equ:jordan}) becomes
\[
\left(
\begin{array}{cc}
\lambda + i \mu & 0 \\
0 & \lambda - i \mu \\
\end{array}
\right) .
\]
The matrix of the operator $M_{s}$ in coordinates $(w_{1},\hdots,w_{n})$
is diagonal.
We denote $(\gamma_{1},\hdots,\gamma_{n})$ the eigenvalues of the matrix $M$ in
coordinates $(w_{1},\hdots,w_{n})$.
We obtain $\overline{\gamma_{j}} = \gamma_{\rho(j)}$ for any $1 \leq j \leq n$ by construction.
Consider a monomial ${\mathfrak h} = \lambda w_{1}^{a_{1}} \hdots w_{n}^{a_{n}} e_{j}$
where $\lambda \in {\mathbb C}$ and $e_{j}$ is the $j$th element of the canonical base
of ${\mathbb C}^{n}$. We denote
$\overline{\mathfrak h} = \overline{\lambda}
w_{\rho (1)}^{a_{1}} \hdots w_{\rho (n)}^{a_{n}} e_{\rho (j)}$.
We obtain that
\begin{equation}
\label{equ:real}
{\mathfrak h} = \frac{{\mathfrak h} + \overline{\mathfrak h}}{2} + i
\frac{{\mathfrak h} - \overline{\mathfrak h}}{2i}
\end{equation}
is the decomposition in real and imaginary parts of ${\mathfrak h}$
(see def. \ref{def:rend}).
\begin{defi}
Let $M \in End({\mathbb R}^{n})$ be a semisimple matrix.
We say that $M$ is diagonal if it is in real Jordan normal form.
In such a case we consider the coordinates $(z_{1},\hdots,z_{n})$
and  $(w_{1},\hdots,w_{n})$ associated to $M$ above.
\end{defi}
\begin{rem}
Given a matrix $M \in  End({\mathbb R}^{n})$
we consider that it is in normal form if $M_{s}$ is diagonal.
We do not require $M$ to be in Jordan normal form. One reason is that
the condition in the semisimple part is simpler. A deeper reason is
that this choice of normal forms is preserved by the exponential.
More precisely
if $M$ is in real Jordan normal form then ${\rm exp}(M)$
is not necessarily in Jordan normal form whereas if
$M_{s}$ is diagonal then ${\rm exp}(M)_{s}$ is diagonal.
\end{rem}
\begin{defi}
We say that $\lambda w_{1}^{a_{1}} \hdots w_{n}^{a_{n}} e_{j}$
(or $\lambda w_{1}^{a_{1}} \hdots w_{n}^{a_{n}} \partial / \partial w_{j}$)
is monomial of degree $a_{1} + \hdots + a_{n}$.
We say that a polynomial is homogeneous of degree $k$ if it is a sum
of degree $k$ monomials.
\end{defi}
\begin{defi}
We say that $\lambda w_{1}^{a_{1}} \hdots w_{n}^{a_{n}} e_{j}$
(or $\lambda w_{1}^{a_{1}} \hdots w_{n}^{a_{n}} \partial / \partial w_{j}$)
is strongly resonant (with respect to $M$)
if $\gamma_{j} = <\gamma, a> =  \sum_{k=1}^{n} \gamma_{k} a_{k}$.
\end{defi}
\begin{rem}
\label{rem:resvf}
Let ${\mathfrak h} = w_{1}^{a_{1}} \hdots w_{n}^{a_{n}} \partial / \partial w_{j}$.
We have
\begin{equation}
\label{equ:comvf}
\left[ \sum_{k=1}^{n} \gamma_{k} w_{k} \frac{\partial}{\partial w_{k}}, {\mathfrak h} \right]=
( <\gamma, a> - \gamma_{j}){\mathfrak h}.
\end{equation}
Then ${\mathfrak h}$
is strongly resonant if and only if
$[\sum_{k=1}^{n} \gamma_{k} w_{k} \partial / \partial w_{k}, {\mathfrak h}]= 0$.
\end{rem}
\begin{defi}
We say that $\lambda w_{1}^{a_{1}} \hdots w_{n}^{a_{n}} e_{j}$
(or $\lambda w_{1}^{a_{1}} \hdots w_{n}^{a_{n}} \partial / \partial w_{j}$)
is weakly resonant (with respect to $M$)
if $\gamma_{j} - <\gamma, a> \in 2 \pi i {\mathbb Z}^{*}$.
We say that the eigenvalues $(\gamma_{1},\hdots,\gamma_{n})$
are weakly resonant if there exists a weakly resonant monomial of
degree greater than $1$.
\end{defi}
\begin{defi}
We say that $\lambda w_{1}^{a_{1}} \hdots w_{n}^{a_{n}} e_{j}$
(or $\lambda w_{1}^{a_{1}} \hdots w_{n}^{a_{n}} \partial / \partial w_{j}$)
is a resonant monomial
if it is either strongly or weakly resonant. Equivalently the monomial is resonant if
$e^{\gamma_{j}} = (e^{\gamma_{1}})^{a_{1}} \hdots (e^{\gamma_{n}})^{a_{n}}$.
\end{defi}
\begin{rem}
\label{rem:resd}
Let ${\mathfrak h} = w_{1}^{a_{1}} \hdots w_{n}^{a_{n}} e_{j}$.
We have
\[
(e^{\gamma_{1}} w_{1}, \hdots ,e^{\gamma_{n}} w_{n})^{\circ (-1)} \circ {\mathfrak h} \circ
(e^{\gamma_{1}} w_{1}, \hdots ,e^{\gamma_{n}} w_{n}) =
e^{-\gamma_{j}}  (e^{\gamma_{1}})^{a_{1}} \hdots (e^{\gamma_{n}})^{a_{n}} {\mathfrak h} . \]
Then ${\mathfrak h}$
is resonant if and only if it commutes with
$(e^{\gamma_{1}} w_{1}, \hdots ,e^{\gamma_{n}} w_{n})$.
\end{rem}
\begin{defi}
We say that a formal endomorphism (resp. vector field) is resonant
(resp. strongly, weakly resonant, nonresonant)
if all its non-vanishing monomials are resonant
(resp. strongly, weakly resonant, nonresonant).
\end{defi}
The property $\overline{\gamma_{j}} = \gamma_{\rho(j)}$ for any $1 \leq j \leq n$ implies
\begin{lem}
\label{lem:spcon}
We have that
$w_{1}^{a_{1}} \hdots w_{n}^{a_{n}} e_{j}$ is resonant
(resp. strongly, weakly resonant)
if and only if
$w_{\rho (1)}^{a_{1}} \hdots w_{\rho (n)}^{a_{n}} e_{\rho (j)}$ is resonant
(resp. strongly, weakly resonant).
\end{lem}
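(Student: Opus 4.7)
The plan is to reduce all three equivalences to a single re-indexing argument based on the fundamental identity $\overline{\gamma_{j}}=\gamma_{\rho(j)}$ established in the construction of the coordinates $(w_{1},\hdots,w_{n})$, together with the fact that $\rho$ is an involution of $\{1,\hdots,n\}$ (which is clear from its definition: $\rho(p)=p+1$, $\rho(p+1)=p$ on paired indices, $\rho(q)=q$ otherwise).

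First I would rewrite the companion monomial by re-indexing. Setting $\ell=\rho(k)$ and using $\rho\circ\rho=\mathrm{Id}$, the expression $w_{\rho(1)}^{a_{1}}\hdots w_{\rho(n)}^{a_{n}}e_{\rho(j)}$ equals $w_{1}^{a_{\rho(1)}}\hdots w_{n}^{a_{\rho(n)}}e_{\rho(j)}$, so its multi-exponent is $b=(a_{\rho(1)},\hdots,a_{\rho(n)})$ and its slot is $\rho(j)$. The strong resonance condition for the new monomial is then $\gamma_{\rho(j)}=\sum_{k}\gamma_{k}a_{\rho(k)}$. Substituting $\ell=\rho(k)$ in the sum and applying $\gamma_{\rho(\ell)}=\overline{\gamma_{\ell}}$ gives
\[
\sum_{k}\gamma_{k}a_{\rho(k)}=\sum_{\ell}\gamma_{\rho(\ell)}a_{\ell}=\overline{\langle\gamma,a\rangle},
\]
while the left hand side equals $\overline{\gamma_{j}}$. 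Hence strong resonance for the companion monomial is equivalent to $\overline{\gamma_{j}-\langle\gamma,a\rangle}=0$, which is equivalent to strong resonance of the original monomial.

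For weak resonance I would repeat the same substitution to get the condition $\overline{\gamma_{j}-\langle\gamma,a\rangle}\in 2\pi i{\mathbb Z}^{*}$, and observe that $2\pi i{\mathbb Z}^{*}$ is closed under complex conjugation (since $\overline{2\pi i m}=-2\pi i m$), so this is equivalent to $\gamma_{j}-\langle\gamma,a\rangle\in 2\pi i{\mathbb Z}^{*}$. For the resonance condition I would use $e^{\overline{z}}=\overline{e^{z}}$ to rewrite $e^{\gamma_{\rho(j)}}=\prod_{k}(e^{\gamma_{k}})^{a_{\rho(k)}}$ as $\overline{e^{\gamma_{j}}}=\overline{\prod_{\ell}(e^{\gamma_{\ell}})^{a_{\ell}}}$, and take conjugates.

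There is no real obstacle here; the only delicate point is to track the index change carefully and, for the weak resonance, to remember that passing from ${\mathbb Z}^{*}$ to $-{\mathbb Z}^{*}$ still lands in ${\mathbb Z}^{*}$, which is why allowing $0$ would also work but the exclusion of $0$ is preserved as well.
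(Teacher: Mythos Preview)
Your argument is correct and is precisely the computation the paper has in mind: in the paper the lemma is stated as an immediate consequence of the identity $\overline{\gamma_{j}}=\gamma_{\rho(j)}$ with no further details, and your re-indexing via the involution $\rho$ together with conjugation is exactly what makes that implication work. There is nothing to add.
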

\begin{lem}
\label{lem:monplay}
Let ${\mathfrak h} = \lambda w_{1}^{a_{1}} \hdots w_{n}^{a_{n}} \partial / \partial w_{j}$
be a strongly resonant monomial.
Let ${\mathfrak k} = \mu  w_{1}^{b_{1}} \hdots w_{n}^{b_{n}} \partial / \partial w_{k}$ be a monomial.
\begin{itemize}
\item $[{\mathfrak h}, {\mathfrak k}]$ is strongly resonant if ${\mathfrak k}$
is strongly resonant.
\item $[{\mathfrak h}, {\mathfrak k}]$ is weakly resonant if ${\mathfrak k}$
is weakly resonant.
\item $[{\mathfrak h}, {\mathfrak k}]$ is nonresonant if ${\mathfrak k}$
is nonresonant.
\end{itemize}
\end{lem}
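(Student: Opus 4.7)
The plan is to compute $[\mathfrak{h},\mathfrak{k}]$ explicitly and observe that every monomial appearing in it carries exactly the same ``resonance defect'' $\langle \gamma, b\rangle - \gamma_{k}$ as $\mathfrak{k}$ itself; the three claims then follow directly from the definitions of strong, weak, and non resonance.

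First, writing $w^{a}$ for $w_{1}^{a_{1}}\cdots w_{n}^{a_{n}}$ and applying the usual Lie bracket of polynomial vector fields, the second order derivative terms cancel and one obtains
\begin{equation*}
[\mathfrak{h},\mathfrak{k}] \;=\; \lambda\mu\, b_{j}\, w^{a+b-e_{j}}\, \frac{\partial}{\partial w_{k}} \;-\; \lambda\mu\, a_{k}\, w^{a+b-e_{k}}\, \frac{\partial}{\partial w_{j}},
\end{equation*}
with the convention that a term with a negative exponent is zero, and with the observation that if $j=k$ then the two surviving monomials coincide and combine into $\lambda\mu(b_{j}-a_{j})\, w^{a+b-e_{j}}\, \partial/\partial w_{j}$.

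Next I would compute the resonance defect of each surviving monomial. For the first one, of multi-exponent $c=a+b-e_{j}$ in direction $\partial/\partial w_{k}$, one has
\begin{equation*}
\langle \gamma, c\rangle - \gamma_{k} \;=\; (\langle \gamma, a\rangle - \gamma_{j}) + (\langle \gamma, b\rangle - \gamma_{k}) \;=\; \langle \gamma, b\rangle - \gamma_{k},
\end{equation*}
the last equality being the strong resonance hypothesis $\langle \gamma, a\rangle = \gamma_{j}$ on $\mathfrak{h}$. An identical computation for the second monomial, of exponent $a+b-e_{k}$ in direction $\partial/\partial w_{j}$, yields the very same defect $\langle \gamma, b\rangle - \gamma_{k}$.

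Hence every non-vanishing monomial of $[\mathfrak{h},\mathfrak{k}]$ shares the defect of $\mathfrak{k}$: it is $0$, lies in $2\pi i \mathbb{Z}^{*}$, or avoids $2\pi i \mathbb{Z}$ according to whether $\mathfrak{k}$ is strongly resonant, weakly resonant, or nonresonant, and the three statements of the lemma follow. There is no genuine obstacle in the argument; the only care required is the bookkeeping for degenerate cases, namely when $j=k$ so the two monomials collapse into one, or when one (or both) of $b_{j}, a_{k}$ vanishes so that $[\mathfrak{h},\mathfrak{k}]$ reduces to a single monomial or to $0$. Both situations are compatible with the vacuous ``all non-vanishing monomials are $\ldots$'' convention used in the definition of resonance of a vector field.
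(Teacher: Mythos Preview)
Your proof is correct. The paper states this lemma without proof, treating it as an immediate computation, and your explicit verification is exactly the natural argument: the Lie bracket of two monomial fields produces at most two monomials, and the strong resonance of $\mathfrak{h}$ forces each of them to inherit the resonance defect $\langle\gamma,b\rangle-\gamma_{k}$ of $\mathfrak{k}$. Your handling of the degenerate cases ($j=k$, or $b_{j}=0$, or $a_{k}=0$, or $[\mathfrak{h},\mathfrak{k}]=0$) via the vacuous convention is also in line with the paper's Definition of resonance for a vector field.
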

\begin{lem}
\label{lem:logres}
Let $\varphi \in \diffh{}{n}$ with $j^{1} \varphi ={\rm exp} (X_{1})$ where
$X_{1}$ is a real linear vector field such that $X_{1,s}$ is diagonal.
Suppose that $\varphi$
is strongly resonant (resp. resonant) with respect to $X_{1,s}$. Then
$\log \varphi_{u}$ is strongly resonant (resp. resonant).
\end{lem}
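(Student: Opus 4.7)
The plan is to reduce the statement to a commutation relation, transfer it to $\varphi_u$ by uniqueness of the Jordan--Chevalley decomposition, and finally transfer it to $\log\varphi_u$ by uniqueness of the logarithm on unipotent formal diffeomorphisms (Lemma \ref{lem:exp}). No manipulation of the series (\ref{equ:log}) will be needed.

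\textbf{Step 1 (resonance as commutation).} In the coordinates $(w_{1},\dots,w_{n})$ adapted to $X_{1,s}=\sum_{k}\gamma_{k}w_{k}\partial/\partial w_{k}$, set $\psi_{t}:=\exp(tX_{1,s})$, which is the diagonal formal diffeomorphism $(e^{t\gamma_{1}}w_{1},\dots,e^{t\gamma_{n}}w_{n})$. A direct computation shows that $\psi_{t}^{\circ(-1)}\circ(cw^{a}e_{j})\circ\psi_{t}=e^{t(\langle a,\gamma\rangle-\gamma_{j})}cw^{a}e_{j}$ and similarly $\psi_{t}^{*}(cw^{a}\partial/\partial w_{j})=e^{t(\langle a,\gamma\rangle-\gamma_{j})}cw^{a}\partial/\partial w_{j}$. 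Summing over monomials yields the dictionary: a formal endomorphism or vector field is strongly resonant iff it commutes with $\psi_{t}$ for every $t\in\mathbb{R}$, and is resonant iff it commutes with $\psi_{1}=\exp(X_{1,s})$ (the latter being precisely Remark \ref{rem:resd} at the monomial level).

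\textbf{Step 2 (transfer to $\varphi_{u}$).} Assume $\varphi$ strongly resonant, so by Step 1 $\psi_{t}^{\circ(-1)}\circ\varphi\circ\psi_{t}=\varphi$ for every $t$. Expanding the left hand side as $(\psi_{t}^{\circ(-1)}\circ\varphi_{s}\circ\psi_{t})\circ(\psi_{t}^{\circ(-1)}\circ\varphi_{u}\circ\psi_{t})$ we obtain a decomposition of $\varphi$ into a formally diagonalizable factor (conjugation preserves formal diagonalizability) and a unipotent factor (the first jet is conjugated, hence remains unipotent) which commute. The uniqueness of the Jordan--Chevalley decomposition forces $\psi_{t}^{\circ(-1)}\circ\varphi_{u}\circ\psi_{t}=\varphi_{u}$ for every $t$. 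The resonant case is identical using only $\psi_{1}$.

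\textbf{Step 3 (transfer to $\log\varphi_{u}$).} Write $Y=\log\varphi_{u}\in\Xf{N}{n}$. Conjugation satisfies $\psi_{t}^{\circ(-1)}\circ\exp(Y)\circ\psi_{t}=\exp(\psi_{t}^{*}Y)$, and $\psi_{t}^{*}Y$ is again nilpotent (its linear part is a conjugate of $j^{1}Y$). Step 2 gives $\exp(\psi_{t}^{*}Y)=\exp(Y)$, so Lemma \ref{lem:exp} yields $\psi_{t}^{*}Y=Y$ for every $t$ in the strongly resonant case, and $\psi_{1}^{*}Y=Y$ in the resonant case. Applying the vector-field half of the dictionary of Step 1, $\log\varphi_{u}$ is strongly resonant (resp.\ resonant), as required. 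The only delicate point is the bookkeeping of Step 1, where one has to verify that conjugation by $\psi_{t}$ acts as a scalar on each monomial; once that diagonal action is recorded, the rest of the argument is an automatic double application of uniqueness.
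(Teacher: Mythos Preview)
Your proof is correct, and it is more conceptual than the paper's own argument. The paper proceeds by observing that, since $\varphi$ is (at least) resonant, it commutes with $\phi=\exp(X_{1,s})$; from this it extracts the \emph{explicit} Jordan--Chevalley decomposition $\varphi_{s}=\phi$, $\varphi_{u}=\phi^{\circ(-1)}\circ\varphi$, notes that composing with the diagonal $\phi^{\circ(-1)}$ does not alter the set of non-vanishing monomials (so $\varphi_{u}$ inherits the resonance type of $\varphi$), and finally passes to $\log\varphi_{u}$ via the series expansion~(\ref{equ:log}), leaving that last step as ``a simple calculation.'' You replace both transfers by uniqueness arguments: uniqueness of the Jordan--Chevalley decomposition to pass from $\varphi$ to $\varphi_{u}$, and Lemma~\ref{lem:exp} to pass from $\varphi_{u}$ to $\log\varphi_{u}$, each time carrying the commutation with $\psi_{t}$ (all $t$) or $\psi_{1}$. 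The gain is that you never touch the logarithm series; the paper's approach, in exchange, yields the concrete identification $\varphi_{s}=\exp(X_{1,s})$ along the way, which is itself useful elsewhere (for instance in the proof of Theorem~\ref{teo:embfor}).
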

\begin{proof}
We have $X_{1,s} = \sum_{k=1}^{n} \gamma_{k} w_{k} \partial / \partial w_{k}$.
Thus
$\phi= (e^{\gamma_{1}} w_{1}, \hdots, e^{\gamma_{n}} w_{n})$ commutes
with $\varphi$ by remark \ref{rem:resd}.
We have that
\[ j^{1} \varphi = {\rm exp}(X_{1,s}) \circ {\rm exp}(X_{1,N}) =
\phi \circ {\rm exp}(X_{1,N}) \]
is the Jordan decomposition of $j^{1} \varphi$.
As a consequence
$\varphi = \phi \circ (\phi^{\circ (-1)} \circ \varphi)$ is the Jordan-Chevalley decomposition of
$\varphi$. In particular we obtain $\varphi_{u} = \phi^{\circ (-1)} \circ \varphi$ and the
non-zero monomials of $\varphi$ and $\varphi_{u}$ coincide.
The rest of the proof is a simple
calculation based on equation (\ref{equ:log}).
\end{proof}
\subsection{Linearization of vector fields and diffeomorphisms}
Formal semisimple diffeomorphisms and vector fields are formally linearizable.
If they are real we can also choose a real formal diffeomorphism as the linearizing
transformation.
\begin{lem}
\label{lem:rlvf}
Let $X \in \Xr{}{n}$ be a formal semisimple vector field.
Then there exists a formal diffeomorphism $\eta \in \diffrh{}{n}$ such that
$\eta^{*} X = j^{1} X$.
\end{lem}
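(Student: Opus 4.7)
The plan is to construct $\eta$ as a Krull-topology limit of real formal diffeomorphisms $(\eta_{d})_{d\geq 1}$ with $\eta_{d}$ linearizing $X$ modulo terms of degree $\geq d+1$. After a preliminary real linear change of variables we may assume $j^{1}X$ is in real Jordan normal form; the semisimplicity of $X$ forces $j^{1}X$ to be semisimple, so this form is block-diagonal with the two types of blocks (\ref{equ:jordan}). Passing to the associated complex coordinates $(w_{1},\ldots,w_{n})$ of subsection \ref{subsec:realmon} we then have $j^{1}X=\sum_{k=1}^{n}\gamma_{k}w_{k}\partial/\partial w_{k}$ with $\overline{\gamma_{j}}=\gamma_{\rho(j)}$, and realness of a formal object translates into the symmetry pairing a monomial with coefficient $\lambda$, exponent $a$ and direction $j$ with the monomial of coefficient $\overline{\lambda}$, exponent $a\circ\rho$ and direction $\rho(j)$.

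Starting from $\eta_{1}=\mathrm{Id}$, assume inductively that $\eta_{d-1}\in\diffrh{}{n}$ has been constructed so that $\eta_{d-1}^{*}X-j^{1}X$ is a sum of homogeneous terms of degree $\geq d$, and let $Y_{d}$ denote its degree $d$ part. Then $Y_{d}$ is real because both $\eta_{d-1}$ and $X$ are. By remark \ref{rem:resvf} the operator $\mathrm{ad}_{j^{1}X}$ acts diagonally on monomials with eigenvalue $\langle\gamma,a\rangle-\gamma_{j}$; its kernel is spanned by the strongly resonant monomials and its image by the non-strongly-resonant ones. The semisimplicity of $X$ provides a (possibly non-real) formal conjugation $\phi^{*}X=j^{1}X$; expanding this identity degree by degree one sees that the strongly resonant component of the error at every order must vanish, whence $Y_{d}$ lies in the image of $\mathrm{ad}_{j^{1}X}$. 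There is therefore a unique non-strongly-resonant degree $d$ vector field $\xi_{d}$ with $[\xi_{d},j^{1}X]=Y_{d}$.

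The crux is to verify that $\xi_{d}$ is itself real. Since $j^{1}X$ is real, $\mathrm{ad}_{j^{1}X}$ commutes with the complex-conjugation involution on formal vector fields; by lemma \ref{lem:spcon} the space of non-strongly-resonant degree $d$ vector fields is invariant under this involution, and $\mathrm{ad}_{j^{1}X}$ restricts to a bijection on it. The realness of $Y_{d}$ therefore forces the realness of its unique non-strongly-resonant preimage $\xi_{d}$. Setting $\eta_{d}=\eta_{d-1}\circ\mathrm{exp}(-\xi_{d})\in\diffrh{}{n}$ then pushes the error into degrees $\geq d+1$, and the resulting sequence $(\eta_{d})$ stabilizes at every jet order, yielding the desired $\eta\in\diffrh{}{n}$ in the Krull topology. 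I expect this realness verification to be the one nontrivial step; its resolution rests on the $\rho$-symmetry of the $w$-coordinates built into (\ref{equ:rtoc}).
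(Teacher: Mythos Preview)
Your proposal is correct and follows essentially the same route as the paper: induction on degree, solving the homological equation $[\xi_d, j^{1}X] = Y_d$ in the non-strongly-resonant subspace, and securing realness of $\xi_d$ through the $\rho$-symmetry of the $w$-coordinates (lemma~\ref{lem:spcon}). The only cosmetic difference is the order in which the strongly resonant part of $Y_d$ is handled: you invoke complex formal linearizability as a black box and ``expand degree by degree'' to conclude it vanishes before solving for $\xi_d$, whereas the paper first removes the non-strongly-resonant part via $\eta_k = \exp(-\tilde{X}_k^0)$ and then applies uniqueness of the Jordan--Chevalley decomposition at jet level $k$ to kill what remains.
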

\begin{lem}
\label{lem:rldif}
Let $\varphi \in \diffrh{}{n}$ be a formal semisimple diffeomorphism.
Then there exists a formal diffeomorphism $\eta \in \diffrh{}{n}$ such that
$\eta^{\circ (-1)} \circ \varphi \circ \eta = j^{1} \varphi$.
\end{lem}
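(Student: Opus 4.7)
The plan is to build $\eta$ inductively as a Krull limit of partial real conjugating diffeomorphisms. After a preliminary real linear change of coordinates I may assume that $j^{1}\varphi$ is in real Jordan normal form, and in the complex coordinates $(w_{1},\hdots,w_{n})$ of Subsection \ref{subsec:realmon} the linear part $j^{1}\varphi$ becomes the diagonal operator with eigenvalues $(\lambda_{1},\hdots,\lambda_{n})$ satisfying $\overline{\lambda_{j}}=\lambda_{\rho(j)}$. I will produce a sequence $(\eta_{k})_{k \geq 1}$ in $\diffrh{}{n}$ with $\eta_{1} = \mathrm{Id}$ and
\[
\eta_{k}^{\circ (-1)} \circ \varphi \circ \eta_{k} \equiv j^{1}\varphi \pmod{{\mathfrak m}^{k+1}}.
\]

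For the inductive step, write $\tilde\varphi := \eta_{k}^{\circ (-1)} \circ \varphi \circ \eta_{k} = j^{1}\varphi + R$ with $R$ real and of valuation at least $k+1$, and look for $\eta_{k+1} = \eta_{k} \circ (\mathrm{Id}+h)$ with $h$ a real $n$-tuple of homogeneous polynomials of degree $k+1$. A direct expansion modulo ${\mathfrak m}^{k+2}$ reduces the problem to the cohomological equation
\[
h \circ j^{1}\varphi - j^{1}\varphi \cdot h = R_{k+1}.
\]
In the $w$-coordinates the left-hand operator is diagonalized on monomials: it sends $w^{a} e_{j}$ to $(\lambda^{a}-\lambda_{j})w^{a} e_{j}$. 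On non-resonant monomials the equation is solved by dividing by $\lambda^{a}-\lambda_{j}$, and by Lemma \ref{lem:spcon} the resonant and non-resonant subspaces are each preserved by the complex conjugation underlying the real structure of Subsection \ref{subsec:realmon}, so the solution $h$ can be chosen real whenever $R_{k+1}$ is.

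The main obstacle is to verify that the resonant component of $R_{k+1}$ vanishes, since there the cohomological operator is identically zero. For this I exploit that $\varphi$ is semisimple, hence $\tilde\varphi$ is semisimple, and so the induced action $\tilde\varphi_{k+1}$ on ${\mathfrak m}/{\mathfrak m}^{k+2}$ is diagonalizable. Written in the monomial basis ordered by degree, this action is block upper triangular with diagonal blocks acting by $w^{a} \mapsto \lambda^{a} w^{a}$; the only nontrivial off-diagonal block links the degree-$1$ columns to the degree-$(k+1)$ rows and its entries are precisely the coefficients of $R_{k+1}$. Diagonalizability forces every entry of this block whose source and target eigenvalues agree---that is, every coefficient of a monomial $w^{b} e_{j}$ in $R_{k+1}$ with $\lambda^{b}=\lambda_{j}$---to vanish, which is exactly the required vanishing of the resonant part. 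The Krull limit $\eta := \lim_{k \to \infty} \eta_{k}$ then lies in $\diffrh{}{n}$ and satisfies $\eta^{\circ (-1)} \circ \varphi \circ \eta = j^{1}\varphi$.
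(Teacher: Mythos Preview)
Your proof is correct and follows essentially the same inductive linearization scheme as the paper: reduce to diagonal linear part, solve the cohomological equation on non-resonant monomials (preserving reality via Lemma~\ref{lem:spcon}), and use semisimplicity of the induced action on $\mathfrak{m}/\mathfrak{m}^{k+1}$ to kill the resonant obstruction. The only cosmetic difference is the order of the last two steps---the paper first conjugates away the non-resonant part and then invokes uniqueness of the Jordan--Chevalley decomposition (the remaining resonant piece is nilpotent and commutes with the semisimple linear part, hence vanishes), whereas you argue directly that diagonalizability of the block-triangular jet action forces the resonant off-diagonal entries to vanish; these are the same argument.
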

\begin{proof}[proof of lemmas \ref{lem:rlvf} and \ref{lem:rldif}]
Let us show lemma  \ref{lem:rlvf}. The proof of lemma  \ref{lem:rldif} is analogous.
Up to a real linear change of coordinates we can suppose that
$j^{1} X_{s}$ is diagonal in coordinates $(z_{1},\hdots,z_{n})$.
We obtain
$j^{1} X =
\gamma_{1} w_{1} \partial / \partial w_{1} + \hdots \gamma_{n} w_{n} \partial / \partial w_{n}$
in the coordinates $(w_{1},\hdots,w_{n})$ introduced in this section.
Suppose that $X$ is of the form
$j^{1} X + X_{k} + Y_{k+1}$ where $X_{k}$ is homogeneous of degree
$k$ and $Y_{k+1}$ is a sum of monomials of degree greater than $k$.
It suffices to prove that there exists a diffeomorphism
$\eta_{k} \in \diffr{}{n}$
such that $\eta_{k}^{*} X = j^{1} X + X_{k+1} + Y_{k+2}$
where $X_{k+1}$ is homogeneous of degree
$k+1$, $Y_{k+2}$ is a sum of monomials of degree greater than $k+1$
and $\eta_{k} -Id$ is a sum
of monomials of degree greater or equal than $k$.
In this way we obtain
$\eta = \lim_{k \to \infty} \eta_{2} \circ \hdots \circ \eta_{k}$ in $\diffrh{}{n}$ such that
$\eta^ {*} X = j^{1} X$.

The vector field $X_{k}$ is a sum of real vector fields of the form
\[ X_{k,a_{1},\hdots,a_{n},j,\lambda} = {\mathfrak h}_{\lambda} \stackrel{def}{=}
\lambda w_{1}^{a_{1}} \hdots w_{n}^{a_{n}} \frac{\partial}{\partial w_{j}} + \overline{\lambda}
w_{\rho (1)}^{a_{1}} \hdots w_{\rho (n)}^{a_{n}}  \frac{\partial}{\partial w_{\rho(j)}} . \]
Indeed the vector field ${\mathfrak h}_{\mu}$ is real for any $\mu \in {\mathbb C}$
(see equation (\ref{equ:real})).
Suppose that the monomials of $ {\mathfrak h}_{\lambda}$ are not strongly resonant.
Since we have $[{\mathfrak h}_{\mu}, j^{1} X] = {\mathfrak h}_{\mu (\gamma_{j} - <\gamma,a>)}$
we define $\mu  = \lambda/(\gamma_{j} - <\gamma,a>)$
and $\tilde{X}_{k,a_{1},\hdots,a_{n},j,\lambda} = {\mathfrak h}_{\mu}$.
We denote by $X_{k}^{0}$ (resp. $\tilde{X}_{k}^{0}$)
the sum of the non-strongly resonant vector fields of
the form $ X_{k,a_{1},\hdots,a_{n},j,\lambda}$ (resp.
of the form $\tilde{X}_{k,a_{1},\hdots,a_{n},j,\lambda}$).
Consider the real diffeomorphism $\eta_{k} = {\rm exp}(-\tilde{X}_{k}^{0})$. We obtain
\[ (\eta_{k}^{\circ (-1)})^{*} j^{1} X = j^{1} X + [\tilde{X}_{k}^{0}, j^{1} X] + \frac{1}{2!}
[\tilde{X}_{k}^{0}, [\tilde{X}_{k}^{0},j^{1}X]] + \hdots  = \]
\[ = j^{1} X + X_{k}^{0} + h.o.t. \]
Moreover we obtain
$(\eta_{k}^{\circ (-1)})^{*} (j^{1} X  + X_{k} - X_{k}^{0}) = j^{1} X + X_{k} + h.o.t$.
Hence $\eta_{k}^{*} X = j^{1} X  + X_{k} - X_{k}^{0} +h.o.t$.
The vector field $\eta_{k}^{*} X$ is still semisimple.
We obtain
\[ (\eta_{k}^{*} X)_{k} = j^{1} X + (X_{k} - X_{k}^{0}) \]
as the Jordan decomposition (see equation (\ref{equ:actvf}))
of  $(\eta_{k}^{*} X)_{k}$
since $j^{1} X$ is semisimple, $X_{k} - X_{k}^{0}$ is nilpotent and
$[ j^{1} X , X_{k} - X_{k}^{0}]=0$ (see eq. (\ref{equ:comvf})).
Since $(\eta_{k}^{*} X)_{k}$ is semisimple and
the Jordan-Chevalley decomposition is unique we obtain  $X_{k} - X_{k}^{0} \equiv 0$.
\end{proof}
\begin{rem}
\label{rem:rlvf}
Let us notice that a simpler version of the previous proof shows that
a formal semisimple vector field in $\Xf{}{n}$ is formally linearizable.
\end{rem}
\section{Embedding flows}
Given $\varphi \in \diffrh{}{n}$ we can consider whether it is embedded
in the flow of a formal vector field in $\Xf{}{n}$ or $\Xr{}{n}$.
A priori these properties could be different.
Indeed real diffeomorphisms can be embedded in the flows of
non-real vector fields, for example we have
$Id = {\rm exp}(2 \pi i z \partial / \partial z)$.
Since Jordanization interprets elements of $\Xr{}{n}$
as formal vector fields in $\Xf{}{n}$ theorem \ref{teo:cir}
justifies our approach.
\begin{proof}[proof of theorem \ref{teo:cir}]
Let $\hat{X} = X_{s} + X_{N}$ be
the Jordan-Chevalley decomposition of $\hat{X}$.
Since ${\rm exp}(X_{s})$ is semisimple (and then formally linearizable)
and ${\rm exp}(X_{N})$ is unipotent
we obtain $\varphi_{s} = {\rm exp}(X_{s})$ and $\varphi_{u} = {\rm exp}(X_{N})$.
We have $\log \varphi_{u} = X_{N}$ by lemma \ref{lem:exp}.
Indeed $\varphi_{u}$ and $X_{N}$ are real.
The difficulty of the proof is that $X_{s}$ is not necessarily real.

We denote $\alpha = Re (X_{s})$ and $\beta = Im(X_{s})$. In fact
$\alpha$ and $\beta$ are elements of $\Xr{}{n}$ such that
$X_{s} = \alpha + i \beta$. By lemma \ref{lem:jdvfr} it suffices to prove
$\varphi = {\rm exp}(\alpha_{s} + \log \varphi_{u})$
where $\alpha_{s}$ is the semisimple part of $\alpha$.

We have
\[ [X_{s},X_{N}]= 0 \implies [\alpha,X_{N}]=0 \ {\rm and} \ [\beta, X_{N}]=0. \]
Moreover since $j^{1} \hat{X}$, $j^{1} X_{s}$ are real then
$j^{1} \alpha = j^{1} X_{s}$ and $j^{1} \beta = 0$.
We obtain ${\rm exp}(t X_{N})^{*} \alpha = \alpha$ for any $t \in {\mathbb C}$
as a consequence of $[\alpha, X_{N}]=0$. The uniqueness of the Jordan-Chevalley
decomposition
\[ {\rm exp}(t X_{N})^{*} (\alpha_{s} + \alpha_{N}) =
{\rm exp}(t X_{N})^{*} (\alpha_{s}) + {\rm exp}(t X_{N})^{*} (\alpha_{N}) = \alpha_{s} + \alpha_{N} \]
implies ${\rm exp}(t X_{N})^{*} (\alpha_{s}) =\alpha_{s}$ for any $t \in {\mathbb C}$.
We deduce $[\alpha_{s},X_{N}]=0$.
We have $\varphi_{s}^{*} (\hat{X}) = \hat{X}$ since $[X_{s}, \hat{X}]=0$
and $\varphi_{s} ={\rm exp}(X_{s})$.
Thus we obtain
$\varphi_{s}^{*} (X_{s})=X_{s}$ and $\varphi_{s}^{*} (X_{N})=X_{N}$
by uniqueness of the Jordan-Chevalley decomposition.
Since $\varphi_{s}$ is real we have $\varphi_{s}^{*} \alpha = \alpha$ and then
$\varphi_{s}^{*} \alpha_{s} = \alpha_{s}$.
We deduce the equality
$\varphi_{s} \circ {\rm exp}(\alpha_{s}) = {\rm exp}(\alpha_{s}) \circ \varphi_{s}$.
Moreover, $j^{1} \alpha = j^{1} X_{s}$ is semisimple; thus we
get $j^{1} \alpha_{s}=j^{1} X_{s}$.

Let us show that $\varphi_{s} = {\rm exp}(\alpha_{s})$. Since
we have $j^{1} \varphi_{s} = j^{1} {\rm exp}(\alpha_{s})$ then the
formal diffeomorphism $\eta =  {\rm exp}(-\alpha_{s}) \circ \varphi_{s}$ is
unipotent. The diffeomorphisms $\varphi_{s}$ and ${\rm exp}(\alpha_{s})$
commute, we obtain
\[ \varphi_{s} = \varphi_{s} \circ Id = {\rm exp}(\alpha_{s}) \circ \eta \]
two Jordan-Chevalley decompositions of $\varphi_{s}$.
We deduce $\varphi_{s} = {\rm exp}(\alpha_{s})$.
We define $\hat{Y} = \alpha_{s} + \log \varphi_{u}$.
It satisfies $j^{1} \hat{Y} = j^{1} \alpha_{s} + j^{1} X_{N} = j^{1} \hat{X}$.
Then $[\alpha_{s},X_{N}]=0$ implies
${\rm exp}(\hat{Y}) = {\rm exp}(\alpha_{s}) \circ {\rm exp}(\log \varphi_{u}) =
\varphi_{s} \circ \varphi_{u} = \varphi$.
\end{proof}

{\bf Example.}
It is clear that there are elements $\varphi$ of
$\diffrh{}{n}$ that can be embedded in flows in
$\Xf{}{n}$ but not in flows in $\Xr{}{n}$.
An example is provided by
$\varphi = (-z_{1}, z_{1}-z_{2})$ since the
linear operator $\varphi$ is embedded in the flow of
$\pi i z_{1} \partial / \partial z_{1} + (-z_{1} + \pi i z_{2}) \partial / \partial z_{2}$
but not in a real one. Indeed Jordan blocks associated to negative eigenvalues
of real matrices with real logarithms appear pairwise \cite{Culver}.
But even in the class of diffeomorphisms in $\diffrh{}{n}$ whose linear part
has a real logarithm it is possible to find elements
that are embedded in flows in
$\Xf{}{n}$ but not in flows in $\Xr{}{n}$.
Next we introduce an example.

Consider $\mu_{1}=-2+ \sqrt{2} 2 \pi i$,
$\mu_{2}=-2- \sqrt{2} 2 \pi i$, $\mu_{3}=3+ (1-4 \sqrt{2}) \pi i$,
$\mu_{4}=3- (1-4 \sqrt{2}) \pi i$. A simple calculation provides that
the ${\mathbb Z}$-module
\[ V = \{ (m_{1},m_{2},m_{3},m_{4}) \in {\mathbb Z}^{4} :
m_{1} \mu_{1} + m_{2} \mu_{2} + m_{3} \mu_{3} +m_{4} \mu_{4} \in 2 \pi i {\mathbb Z} \} \]
is equal to
${\mathbb Z} (3,3,2,2) + {\mathbb Z} (5,1,3,1)$.
We define $\mu_{1}'= \mu_{1} - 4 \pi i$, $\mu_{2}'=\mu_{2}$,
$\mu_{3}'=\mu_{3}+ 6 \pi i$, $\mu_{4}'=\mu_{4}$. We obtain
\[ m_{1} \mu_{1}' + m_{2} \mu_{2}' + m_{3} \mu_{3}' +m_{4} \mu_{4}' =0 \ \forall
(m_{1},m_{2},m_{3},m_{4}) \in  V . \]
On the contrary we have
\begin{equation}
\label{equ:wrex}
5 \mu_{1}'' + \mu_{2}'' + 3 \mu_{3}'' + \mu_{4}''  \neq 0 .
\end{equation}
for any choice of logarithms
$\mu_{1}''$, $\mu_{2}''$, $\mu_{3}''$, $\mu_{4}''$ of
$e^{\mu_{1}}$, $e^{\mu_{2}}$, $e^{\mu_{3}}$, $e^{\mu_{4}}$
respectively such that $\mu_{1}'' = \overline{\mu_{2}''}$,
$\mu_{3}'' = \overline{\mu_{4}''}$.
Otherwise if $\mu_{1}''= \mu_{1} - 2 \pi i k_{1}$ and
$\mu_{3}''= \mu_{3} - 2 \pi i k_{2}$ we obtain
$4 k_{1} + 2 k_{2} =1$ for some $k_{1}, k _{2} \in {\mathbb Z}$.

Consider the linear diffeomorphism $\varphi_{0}$ defined by
\[
\left( -2 z_{1} -  \pi \sqrt{8} z_{2}, \pi \sqrt{8} z_{1}  -2 z_{2},
3 z_{3} -  (1 - 4 \sqrt{2}) \pi z_{4}, (1 - 4 \sqrt{2}) \pi z_{3} + 3 z_{4} \right). \]
By considering a real logarithm $B$ of $\varphi_{0}$ we can
introduce coordinates $(w_{1},w_{2},w_{3},w_{4})$
such that $\varphi_{0}=
(e^{\mu_{1}} w_{1}, e^{\mu_{2}} w_{2}, e^{\mu_{3}} w_{3}, e^{\mu_{4}} w_{4})$
as in section \ref{subsec:realmon}.
These coordinates do not depend on the choice of the matrix $B$
since the decomposition of ${\mathbb C}^{4}$ as direct sum of eigenspaces of $B$
does not depend on $B$. Indeed it
coincides with the analogous decomposition associated to $\varphi_{0}$.
Consider
\[ \varphi = (e^{\mu_{1}} w_{1} + w_{1}^{6} w_{2}  w_{3}^{3}  w_{4}
, e^{\mu_{2}} w_{2} + w_{1} w_{2}^{6}  w_{3}  w_{4}^{3} ,
e^{\mu_{3}} w_{3}, e^{\mu_{4}} w_{4}) . \]
It is a real hyperbolic element of $\diffr{}{4}$.
We have that $\varphi$ is embedded in a formal flow of linear part
$\sum_{j=1}^{4} \mu_{j}' w_{j} \partial / \partial w_{j}$
by theorem \ref{teo:stunique}.
The monomials $w_{1}^{6} w_{2}  w_{3}^{3}  w_{4} e_{1}$ and
$w_{1} w_{2}^{6}  w_{3}  w_{4}^{3} e_{2}$ are weakly resonant for any
choice of a real logarithm of $\varphi_{0}$ by equation (\ref{equ:wrex}).
Therefore $\varphi$ is not embedded in a flow in $\Xr{}{4}$
by theorem \ref{teo:conj} (remark that $X_{1,N}$ is a vanishing vector field).
Of course $\sum_{j=1}^{4} \mu_{j}' w_{j} \partial / \partial w_{j}$ is not real.
We can enlarge the class of embeddable diffeomorphisms by considering
non real logarithms of the linear part but if the linear part of the logarithm
is real theorem \ref{teo:cir} implies that
we can not enlarge the class of embeddable diffeomorphisms by
trying to consider non real formal flows.

A classical
way of obtaining normal forms for local holomorphic vector fields and diffeomorphisms
is by considering changes of coordinates in which the semisimple part is linear.
We apply this ideas to characterize whether or not a diffeomorphism in
$\diffr{}{n}$ or $\diff{}{n}$ is embedded in a formal flow $X$.
The embeddability of the diffeomorphism is equivalent to the existence
of a strongly resonant normal form.
\begin{proof}[proof of theorem \ref{teo:embfor}]
Let us prove the result for $\varphi \in \diffrh{}{n}$. The proof for
$\varphi$ in $\diffh{}{n}$ is simpler.

Suppose that $\varphi = {\rm exp}(X)$ with $X \in \Xr{}{n}$ and $j^{1} X = X_{1}$.
Consider the Jordan-Chevalley decomposition $X =X_{s} + X_{N}$ of $X$.
We have $j^{1} X_{s} = X_{1,s}$.
The proof of lemma \ref{lem:rlvf} implies the existence of a
tangent to the identity $\eta \in \diffrh{}{n}$ such that
$\eta^{*} X_{s} = j^{1} X_{s}$. We have
\[ \eta^{\circ (-1)} \circ \varphi \circ \eta = {\rm exp}(\eta^{*} X) =
 {\rm exp}(X_{1,s} + \eta^{*} X_{N}) = {\rm exp}(X_{1,s}) \circ {\rm exp}(\eta^{*} X_{N}) . \]
Moreover $\eta^{*} X_{N}$ is strongly resonant since $[X_{1,s},\eta^{*} X_{N}]=0$
(remark \ref{rem:resvf}). Thus ${\rm exp}(\eta^{*} X_{N})$ is strongly resonant.
Since ${\rm exp}(X_{1,s})$ is diagonal then
$\eta^{\circ (-1)} \circ \varphi \circ \eta$ is strongly resonant.

Suppose that $\tilde{\varphi} = \eta^{\circ (-1)} \circ \varphi \circ \eta$ is strongly resonant.
We define $\phi = {\rm exp}(-X_{1,s}) \circ \tilde{\varphi}$.
Since $j^{1} \phi = (j^{1} {\varphi}_{s})^{\circ (-1)} \circ j^{1} \varphi = j^{1} \varphi_{u}$
then $\phi$ is unipotent. Moreover  $\tilde{\varphi}$ and ${\rm exp}(X_{1,s})$ commute;
thus $\tilde{\varphi} = {\rm exp}(X_{1,s}) \circ \phi$ is the Jordan-Chevalley
decomposition of $\tilde{\varphi}$.
We apply lemma \ref{lem:logres} to $\tilde{\varphi}$
to obtain that $\log \phi \in \Xr{}{n}$ is strongly resonant.
This implies the key property
$[X_{1,s}, \log \phi] = 0$ (remark \ref{rem:resvf}).  We denote $X=X_{1,s} + \log \phi$.
We obtain
\[ {\eta}^{\circ (-1)} \circ {\varphi} \circ {\eta} =
{\rm exp}(X_{1,s}) \circ {\rm exp}(\log \phi) =
{\rm exp}(X_{1,s} + \log \phi) . \]
We have $j^{1} (X_{1,s} + \log \phi) = X_{1,s} + j^{1} \log \varphi_{u} =X_{1}$
by lemma \ref{lem:exp}. Thus $\varphi$ is of the form
${\rm exp}(\eta_{*}X)$ where $\eta_{*}X \in \Xr{}{n}$ and $j^{1} \eta_{*}X = X_{1}$.
\end{proof}
\begin{rem}
Consider the case $\varphi \in \diffrh{}{n}$ and $X_{1} \in \Xrn{}{n}$
in theorem \ref{teo:embfor}. Then a normalizing map $\eta \in \diffh{}{n}$
implies that $\varphi$ has an embedding flow whose first jet is equal to
$X_{1}$ by theorems \ref{teo:embfor} and \ref{teo:cir}.
\end{rem}
As an application of Jordanization techniques we present
a new, simpler proof of theorem \ref{teo:stunique}.
The idea is that if a diffeomorphism is embedded in a flow $X$ then
the linearization of its semisimple part $X_{s}$ provides a strongly
resonant expression. We can always obtain a resonant expression
of $\varphi$ by linearizing $\varphi_{s}$. The hypothesis implies
that these concepts are the same.
\begin{proof}[proof of th. \ref{teo:stunique}]
Let us suppose that $\varphi$ and $X_{1}$ are real in order to prove the existence.
The general case is simpler.
Up to a real linear change of coordinates we can suppose that $X_{1,s}$ is
diagonal in coordinates $(z_{1},\hdots,z_{n})$. Let
$(w_{1},\hdots,w_{n})$ be the system of coordinates introduced in section
\ref{subsec:realmon}.
In particular $\varphi_{1,s}={\rm exp}(X_{1,s})$ is diagonal.

Consider the Jordan-Chevalley decomposition
\[ {\varphi} = {\varphi}_{s} \circ {\varphi}_{u} = {\varphi}_{u} \circ {\varphi}_{s} \]
of ${\varphi}$. There exists a tangent to the identity ${\eta} \in \diffrh{}{n}$ such that
${\eta}^{\circ (-1)} \circ {\varphi}_{s} \circ {\eta} = j^{1} {\varphi}_{s}$
by lemma \ref{lem:rldif}.
We denote $\tilde{\varphi}= {\eta}^{\circ (-1)} \circ {\varphi} \circ {\eta}$.
Since $\tilde{\varphi}$ commutes with $\tilde{\varphi}_{s}=j^{1} {\rm exp}(X_{1,s})$
then $\tilde{\varphi}$ is resonant (rem. \ref{rem:resd}).
Moreover the properties $[X_{1,s},X_{1}]=0$ and
$j^{1} \tilde{\varphi} = {\rm exp}(X_{1})$ imply that
$j^{1} \tilde{\varphi}$ is strongly resonant.
In particular $\tilde{\varphi}$ is strongly resonant by hypothesis.
There exists
$\hat{X} \in \Xr{}{n}$ with $j^{1} \hat{X} = X_{1}$ such that
$\hat{\varphi} ={\rm exp}(\hat{X})$ (th. \ref{teo:embfor}).
%
%

We have to prove that
${\varphi} = {\rm exp}(\hat{X}) = {\rm exp}(\hat{Y})$ and
$j^{1} \hat{X} = j^{1} \hat{Y} = X_{1}$ imply $\hat{X}=\hat{Y}$.
Let $\hat{X} = X_{s} + X_{N}$, $\hat{Y} = Y_{s} + Y_{N}$ be the
Jordan-Chevalley decompositions of $\hat{X}$, $\hat{Y}$ respectively.
We have
$j^{1} X_{s} = j^{1} Y_{s} = X_{1,s}$,
${\rm exp}(X_{s})={\rm exp}(Y_{s})$ and $X_{N}=Y_{N}=\log {\varphi}_{u}$.
Up to a formal change of coordinates we can suppose that
$X_{s}=j^{1} X_{s}$ (lemma \ref{lem:rlvf} and remark \ref{rem:rlvf}). There exists a formal
tangent to the identity diffeomorphism $\eta \in \diffh{}{n}$ conjugating
$X_{s}=j^{1} X_{s}$ and $Y_{s}$ (remark \ref{rem:rlvf}). We obtain
\[ \eta \circ {\rm exp}(X_{s}) = {\rm exp}(Y_{s}) \circ \eta \implies
\eta \circ j^{1} {\varphi}_{s} =  j^{1} {\varphi}_{s} \circ \eta. \]
Hence $\eta$ is resonant (remark \ref{rem:resd}) and by hypothesis strongly
resonant. Thus we obtain $Y_{s} = \eta_{*} j^{1} X_{s} = j^{1} X_{s} = X_{s}$
and $\hat{X} = \hat{Y}$.
\end{proof}
\begin{proof}[proof of theorem \ref{teo:Zhang}]
We include Zhang's proof for the sake of completeness.
Let $\hat{\varphi} \in \diffrh{}{n}$ be the asymptotic development of $\varphi$.
Consider the element $\hat{X}$ of $\Xr{}{n}$ such that
$\hat{\varphi} = {\rm exp}(\hat{X})$ and $j^{1} \hat{X} = X_{1}$ provided
by th. \ref{teo:stunique}.
Let $X'$ be an element of $\Xrn{\infty}{n}$ whose
asymptotic development at the origin is equal to $\hat{X}$.
The hyperbolic diffeomorphisms $\varphi$ and ${\rm exp}(X')$ are
formally conjugated by the identity.
Two formally conjugated hyperbolic $C^{\infty}$ local diffeomorphisms
are conjugated by a local $C^{\infty}$ diffeomorphism (Chen \cite{Chen}).
There exists $\upsilon \in \diffr{\infty}{n}$
such that $\varphi = \upsilon^{\circ (-1)} \circ {\rm exp}(X') \circ \upsilon$.
We obtain $\varphi = {\rm exp}(X)$ for
$X = {\rm exp}(\upsilon^{*} X') \in \Xrn{\infty}{n}$.
\end{proof}
It is interesting to study to what extent weakly resonances are obstructions
for diffeomorphisms to be embedded. In this spirit
we want to address a conjecture by Zhang.
Let $A$ be a hyperbolic matrix in $GL(n,{\mathbb R})$ and let $B$ be
a real logarithm of $A$.
\begin{conj}
\cite{Zhang}
If $f(z) = O(|z|^{2})$ is $C^{\infty}$ (resp. analytic) and it has
a non-vanishing weakly resonant monomial
$w_{1}^{m_{1}} \hdots w_{n}^{m_{n}} e_{j}$ (with respect to $B$),
then the locally hyperbolic diffeomorphism $\varphi (z) = A z + f(z)$ is not embedded
in a $C^{\infty}$ (resp. analytic) flow.
\end{conj}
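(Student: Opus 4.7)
The plan is to reduce to the formal setting and then apply the normal-form characterization of embeddability established in Theorem \ref{teo:embfor}. By the theorem of Chen used in the proof of Theorem \ref{teo:Zhang}, a hyperbolic $C^{\infty}$ local diffeomorphism is embedded in a $C^{\infty}$ flow if and only if its formal Taylor development $\hat{\varphi}$ is embedded in a formal flow. So it suffices to prove: if $\hat{\varphi} = Az + \sum_{k \geq 2} f_{k} \in \diffrh{}{n}$ contains a non-vanishing weakly resonant monomial with respect to $B$, then $\hat{\varphi}$ admits no formal embedding flow $X \in \Xr{}{n}$ with $j^{1} X = B$.

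Suppose for contradiction that $\hat{\varphi} = {\rm exp}(X)$ for such an $X$. Work in the coordinates $(w_{1}, \dots, w_{n})$ of section \ref{subsec:realmon} in which $X_{1,s}$ is diagonal. Theorem \ref{teo:embfor} furnishes a tangent-to-identity $\eta \in \diffrh{}{n}$ such that $\tilde{\varphi} := \eta^{\circ(-1)} \circ \hat{\varphi} \circ \eta$ is strongly resonant with respect to $X_{1,s}$; in particular, no weakly resonant monomial appears in $\tilde{\varphi}$ at any order. The aim is to contradict this by showing that the weakly resonant monomial of $\hat{\varphi}$ forces a non-vanishing weakly resonant monomial in $\tilde{\varphi}$.

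Let $k_{0}$ be the minimal degree at which $\hat{\varphi} - Az$ has a weakly resonant monomial. Writing $\eta = Id + \sum_{j \geq 2} \eta_{j}$ and expanding the conjugation degree by degree, the degree-$k_{0}$ component $\tilde{\varphi}_{k_{0}}$ equals $f_{k_{0}}$ plus a correction built out of $\eta_{j}$ and $f_{j}$ for $j < k_{0}$. Lemma \ref{lem:monplay} controls how resonance type propagates under Lie brackets: brackets of strongly resonant monomials with weakly resonant ones are weakly resonant, brackets of strongly resonant with strongly resonant are strongly resonant, and brackets of strongly resonant with non-resonant are non-resonant. I would use this, together with the minimality of $k_{0}$, to decompose $\tilde{\varphi}_{k_{0}}$ according to resonance type and argue that its weakly resonant component inherits the non-trivial weakly resonant piece of $f_{k_{0}}$, yielding the sought contradiction with strong resonance of $\tilde{\varphi}$.

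The main obstacle is the cancellation analysis in this persistence step. The correction to $f_{k_{0}}$ at degree $k_{0}$ involves iterated Baker--Campbell--Hausdorff-type brackets of the $\eta_{j}$ with the lower-degree terms of $\hat{\varphi}$, and a priori strongly resonant (or even non-resonant) monomials of $\hat{\varphi}$ at degrees less than $k_{0}$ might combine through nested brackets to inject weakly resonant contributions at degree $k_{0}$ that cancel the weakly resonant part of $f_{k_{0}}$. Ruling this out rigorously is the heart of the argument: it will require exploiting both the algebra of resonances encoded in Lemma \ref{lem:monplay} and the minimality of $k_{0}$ in order to guarantee that the sole weakly resonant contribution at degree $k_{0}$ in $\tilde{\varphi}$ is the one inherited from $f_{k_{0}}$ itself.
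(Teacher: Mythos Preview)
The conjecture you are attempting to prove is \emph{false}: the paper disproves it by exhibiting explicit counterexamples in subsections \ref{subsec:ex1} and \ref{subsec:ex2}. Both are hyperbolic real-analytic diffeomorphisms that contain non-vanishing weakly resonant monomials and yet are, by construction, time-one maps of real-analytic flows. Hence no argument along your lines can succeed, and the obstacle you flagged in your final paragraph is a genuine, unremovable gap rather than a technicality.

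Concretely, the cancellation you worry about does occur, for two independent reasons. First, your assertion that the degree-$k_{0}$ correction $\tilde{\varphi}_{k_{0}}-f_{k_{0}}$ is built only from $\eta_{j}$ and $f_{j}$ with $j<k_{0}$ is incorrect: it also contains the adjoint action of the linear part $A$ on $\eta_{k_{0}}$ itself. Since $\eta$ is merely tangent to the identity, $\eta_{k_{0}}$ is unconstrained and may be taken weakly resonant; the $A_{s}$-part of the adjoint action then vanishes (remark \ref{rem:resd}), but the $X_{1,N}$-part contributes a weakly resonant term whenever there is a weakly resonant $Y$ of degree $k_{0}$ with $[X_{1,N},Y]\neq 0$ (Lemma \ref{lem:monplay}), and this term can be arranged to cancel the weakly resonant piece of $f_{k_{0}}$. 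This is exactly the mechanism of subsection \ref{subsec:ex2}, and it is why Theorem \ref{teo:conj} imposes the extra hypothesis $[X_{1,N},Y]=0$ for every weakly resonant $Y$. Second, even when $X_{1,N}=0$, the minimality of $k_{0}$ constrains only the monomials appearing in $\varphi$, not those of $\eta$: if weakly resonant monomials of some degree $l$ with $2\leq l<k_{0}$ exist in the abstract, then $\eta_{l}$ may be weakly resonant, and its bracket with a strongly resonant piece $f_{m}$ of $\varphi$ (with $l+m-1=k_{0}$) is again weakly resonant by Lemma \ref{lem:monplay} and can cancel the weakly resonant part of $f_{k_{0}}$. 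This is the mechanism of subsection \ref{subsec:ex1}, and it is why Theorem \ref{teo:conj} also requires condition (a) or (b).
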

The conjecture as stated is false and we provide two counterexamples given by
resonant diffeomorphisms.
\subsection{Building examples}
We explain a method to obtain non-vanishing weakly resonant monomials for embeddable
diffeomorphisms even if the diffeomorphism is resonant.
Let us consider $X_{s}$ a real linear diagonal vector field.
Consider  a real  nilpotent vector field $X_{N}$ such that $[X_{s},X_{N}]=0$.
In particular $X_{N}$ is strongly resonant.
We also suppose that $X_{N}$ is homogeneous of degree $k$.
It is clear that
\[ \varphi = {\rm exp}(X_{s} + X_{N}) = {\rm exp}(X_{s}) \circ {\rm exp}(X_{N})
= {\rm exp}(X_{N}) \circ {\rm exp}(X_{s}) \]
is embeddable in $\diffr{}{n}$. Consider $\eta = Id + \eta_{l} \in \diffr{}{n}$ ($l \geq 2$)
where $\eta_{l}$ is a homogeneous weakly resonant endomorphism of degree $l$.
It is clear that $\eta^{\circ (-1)} \circ \varphi \circ \eta$ is embeddable in $\diffr{}{n}$.
Moreover if $j^{1} \varphi$ is hyperbolic then any $\phi \in \diffr{\infty}{n}$ whose
asymptotic development coincides with $\varphi$ is embeddable by Chen's theorem \cite{Chen}.

We denote $\alpha_{s} = {\rm exp}(X_{s})$. Since $\eta$ is resonant
then  $\eta^{\circ (-1)} \circ \alpha_{s} \circ \eta = \alpha_{s}$
(remark \ref{rem:resd}). Moreover we have
\begin{equation}
\label{equ:taylorvf}
\eta^{*} X_{N} = X_{N} + [\log \eta, X_{N}] + \frac{1}{2!} [\log \eta, [ \log \eta, X_{N}]] + \hdots
\end{equation}
It is natural to try to find a weakly resonant
$Y = w_{1}^{a_{1}} \hdots w_{n}^{a_{n}} \partial / \partial w_{j}$ of degree $l$
such that $[Y, X_{N}] \neq 0$.
Then $[Re (Y) + i Im (Y), X_{N}] \neq 0$ (see eq. (\ref{equ:real}))
implies either $[Re (Y) , X_{N}]  \neq 0$  or $[Im (Y), X_{N}] \neq 0$.
Anyway there exists a real weakly resonant homogeneous vector field $Y'$ of degree $l$
such that $[Y', X_{N}] \neq 0$ (lemma \ref{lem:spcon}).
We define $\eta = j^{l} {\rm exp}(Y')$.
The formula (\ref{equ:taylorvf}) implies that
$\eta^{*} X_{N}$ is of the form $X_{N} + [Y',X_{N}] + Z_{l+k}$
where $[Y',X_{N}]$ is weakly resonant and homogeneous of degree $l+k-1$
by lemma \ref{lem:monplay}
and $Z_{l+k}$ is a sum of monomials of degree greater or equal than $l+k$.
Therefore $\eta^{*} X_{N}$ is not strongly resonant. On the contrary
$\eta^{*} X_{N}$ is resonant since
\[ \alpha_{s}^{*} (\eta^{*} X_{N}) = (\eta \circ \alpha_{s})^{*} X_{N} =
(\alpha_{s} \circ \eta)^{*} X_{N} = \eta^{*}  (\alpha_{s}^{*} X_{N}) = \eta^{*} X_{N} . \]
The second equality is as consequence of the resonant nature of $\eta$.
We deduce that ${\rm exp} (\eta^{*} X_{N})$ is resonant but not strongly resonant by
lemma \ref{lem:logres}. Thus
\[ \eta^{\circ (-1)} \circ \varphi \circ \eta = (\eta^{\circ (-1)} \circ \alpha_{s} \circ \eta)
\circ (\eta^{\circ (-1)} \circ {\rm exp}(X_{N}) \circ \eta) =
\alpha_{s} \circ {\rm exp}(\eta^{*} X_{N}) \]
is embeddable, resonant but not strongly resonant.

Next we provide a condition on the eigenvalues of $X_{s}$ that guarantees
that the previous method can be applied. Roughly speaking the condition is equivalent to
the existence of  infinitely many independent weakly linear monomials.
\begin{lem}
Let $X_{s} \in \Xrn{}{n}$ be a linear diagonal vector field.
Let $0 \neq X_{N} \in \Xrn{}{n}$ be  a homogeneous nilpotent vector field such that
$[X_{s},X_{N}]=0$. Suppose that $X_{N}$ is
strongly resonant.
Suppose that the eigenvalues $\gamma_{1}$, $\hdots$, $\gamma_{n}$ of $X_{s}$
satisfy $\sum_{j=1}^{n} m_{j} \gamma_{j} \in 2 \pi i {\mathbb Z}^{*}$
for some $(m_{1},\hdots,m_{n}) \in ({\mathbb N} \cup \{0\})^{n}$.
Then there exists a weakly resonant monomial vector field $Y$ such that
$[Y,X_{N}] \neq 0$.
\end{lem}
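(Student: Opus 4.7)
The plan is a proof by contradiction: assume $[Y, X_{N}] = 0$ for every weakly resonant monomial $Y$, and derive $X_{N} = 0$, contradicting the hypothesis. The main device is the standard identity $[fV, W] = f[V, W] - W(f)\, V$ for a scalar polynomial $f$ and formal vector fields $V, W$, which lets us convert information about brackets of weakly resonant monomials into information about brackets of strongly resonant ones.

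Write $m = (m_{1}, \hdots, m_{n})$ and $w^{m} = w_{1}^{m_{1}} \hdots w_{n}^{m_{n}}$. The starting observation is that if $Z = w^{a} \partial / \partial w_{j}$ is any strongly resonant monomial, then $w^{m} Z = w^{a+m} \partial / \partial w_{j}$ is itself a single weakly resonant monomial: indeed $\gamma_{j} - <\gamma, a+m> = -<\gamma, m> \in 2\pi i \mathbb{Z}^{*}$. Hence by assumption $[w^{m} Z, X_{N}] = 0$, and the identity gives $w^{m} [Z, X_{N}] = X_{N}(w^{m})\, Z$. Now substitute $Z = X_{N}$ itself (extending by linearity, since $X_{N}$ is a sum of strongly resonant monomials): the left-hand side becomes $w^{m} [X_{N}, X_{N}] = 0$, so $X_{N}(w^{m})\, X_{N} = 0$ and, since $X_{N} \neq 0$, we obtain the scalar identity $X_{N}(w^{m}) = 0$. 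Plugging this back yields $w^{m} [Z, X_{N}] = 0$ for every strongly resonant monomial $Z$, and since multiplication by the nonzero monomial $w^{m}$ is injective on formal vector fields, we conclude $[Z, X_{N}] = 0$ for every strongly resonant monomial $Z$.

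The contradiction is produced by specializing $Z$ to the Euler-type field $w_{p} \partial / \partial w_{p}$, which is strongly resonant for every $p$ (since $\gamma_{p} = \gamma_{p}$). A direct computation gives $[w_{p} \partial / \partial w_{p},\, c\, w^{a} \partial / \partial w_{j}] = c(a_{p} - \delta_{pj})\, w^{a} \partial / \partial w_{j}$, so requiring this bracket to vanish for every $p$ and every monomial summand of $X_{N}$ forces $a = e_{j}$ in each monomial of $X_{N}$. Hence $X_{N}$ would reduce to a diagonal linear vector field $\sum_{j} d_{j} w_{j} \partial / \partial w_{j}$; the nilpotency of $j^{1} X_{N}$ then forces all $d_{j} = 0$, giving $X_{N} = 0$ and contradicting the hypothesis.

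The main obstacle is to find the right Lie-algebraic leverage to convert the assumed vanishing on \emph{weakly} resonant monomials into a rigid structural constraint. The essential trick is substituting $Z = X_{N}$ into the identity $[w^{m} Z, X_{N}] = w^{m} [Z, X_{N}] - X_{N}(w^{m})\, Z$ to first decouple the scalar equation $X_{N}(w^{m}) = 0$; without this step, the correction term $X_{N}(w^{m})\, Z$ blocks any direct passage from weakly resonant brackets to strongly resonant ones, and in particular blocks the application of the Euler fields that ultimately pins down $X_{N}$.
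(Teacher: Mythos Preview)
Your proof is correct and takes a genuinely different route from the paper's. The paper argues by testing $X_{N}$ against the explicit family $W_{k,j} = (w^{m})^{k} w_{j}\,\partial/\partial w_{j}$ of weakly resonant monomials: from $[X_{N},W_{k,q}](w_{j})=0$ for $q\neq j$ it reads off $\partial b_{j}/\partial w_{q}=0$, so each component $b_{j}$ of $X_{N}$ depends only on $w_{j}$, and then a second direct computation with $[X_{N},W_{k,j}](w_{j})=0$ kills the remaining coefficients. Your argument is more Lie-algebraic: the Leibniz identity together with the substitution $Z=X_{N}$ isolates the scalar relation $X_{N}(w^{m})=0$, which then upgrades the hypothesis to $[Z,X_{N}]=0$ for \emph{all} strongly resonant monomials $Z$, and the Euler fields $w_{p}\,\partial/\partial w_{p}$ finish the job. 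Your approach has the pleasant side effect of never using the homogeneity of $X_{N}$ (only that it is strongly resonant and nilpotent), so it proves a slightly stronger statement; the paper's computation, by contrast, relies on homogeneity to write $b_{j}=\lambda_{j}w_{j}^{d}$.
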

The resonances are considered with respect to $X_{s}$.
\begin{proof}
We denote $X_{N} = \sum_{q=1}^{n} b_{q}(w_{1},\hdots,w_{n}) \partial / \partial w_{q}$.
Every monomial vector field
$W_{k,j} = (w_{1}^{m_{1}} \hdots w_{n}^{m_{n}})^{k} w_{j} \partial / \partial w_{j}$
is weakly resonant for all $k \geq 1$ and $1 \leq j \leq n$.
It suffices to prove that we can not have $[X_{N},W_{k,j}] = 0$ for all
$k \geq 1$ and $1 \leq j \leq n$.

The property $[X_{N},W_{k,q}] (w_{j}) = 0$ for $q \neq j$ implies
$\partial b_{j} / \partial w_{q} = 0$. In particular $b_{j}$ depends only on $w_{j}$
for any $1 \leq j \leq n$.
Let $d$ be the common degree of the polynomials $b_{1}$, $\hdots$, $b_{n}$.
The polynomial $b_{j}$ is of the form $\lambda_{j} w_{j}^{d}$ for some
$\lambda_{j} \in {\mathbb C}$ and any $1 \leq j \leq n$.
We have $d \geq 2$, otherwise we would get $X_{N}=0$ since $X_{N}$ is nilpotent.
The property
$[X_{N},W_{k,j}] (w_{j}) = 0$ implies
\[ \lambda_{j} d  w_{j}^{d} =
\sum_{q \neq j}
\lambda_{q} k m_{q} w_{q}^{d-1}  w_{j} +
\lambda_{j} (k m_{j} + 1)  w_{j}^{d}  \]
for any $k \in {\mathbb N}$.  We deduce $\lambda_{j}=0$ for any $1 \leq j \leq n$.
We obtain $X_{N}=0$ contradicting the hypothesis.
\end{proof}
\subsection{Example}
\label{subsec:ex1}
We consider
\[ X_{s} = -2 z_{1} \frac{\partial}{\partial z_{1}} +
\left( z_{2} - \frac{\pi}{2} z_{3} \right)  \frac{\partial}{\partial z_{2}} +
\left( \frac{\pi}{2}  z_{2} + z_{3} \right)  \frac{\partial}{\partial z_{3}} \]
or
\[ X_{s} = -2 w_{1} \frac{\partial}{\partial w_{1}} +
\left( 1 + \frac{\pi}{2} i \right) w_{2}  \frac{\partial}{\partial w_{2}} +
\left( 1 - \frac{\pi}{2} i \right) w_{3}  \frac{\partial}{\partial w_{3}} \]
in coordinates $(w_{1},w_{2},w_{3})$. We have $\rho (1) =1$, $\rho (2)=3$ and $\rho (3)=2$.
The monomial $X_{N} = w_{1}^{2} w_{2} w_{3} \partial / \partial w_{1}$ is real,
nilpotent and strongly resonant. The monomial $Y = w_{1} w_{3}^{3}  \partial / \partial w_{2}$
is weakly resonant.
We define
$Y' =  w_{1} w_{3}^{3}  \partial / \partial w_{2} +  w_{1} w_{2}^{3}  \partial / \partial w_{3}$.
We obtain
\[ [Y', X_{N}] = (w_{1}^{3} w_{3}^{4} + w_{1}^{3} w_{2}^{4}) \frac{\partial}{\partial w_{1}} -
w_{1}^{2} w_{2} w_{3}^{4}  \frac{\partial}{\partial w_{2}} -
w_{1}^{2} w_{2}^{4} w_{3}  \frac{\partial}{\partial w_{3}}  . \]
We define
\[ \eta = j^{4} {\rm exp}(Y') =
(w_{1}, w_{2} + w_{1} w_{3}^{3}, w_{3} + w_{1} w_{2}^{3}). \]
and ${\varphi} = {\rm exp}(X_{s} + \eta^{*} X_{N})$. We obtain
$\varphi = (\varphi_{1},\varphi_{2},\varphi_{3})$ with
\[  \varphi_{1} = e^{-2} (w_{1} + w_{1}^{2} w_{2} w_{3} +
w_{1}^{3} w_{2}^{4} + w_{1}^{3} w_{3}^{4} + w_{1}^{3} w_{2}^{2} w_{3}^{2}) + \hdots, \]
\[ \varphi_{2} =  e^{1+\pi i/2} (w_{2} - w_{1}^{2} w_{2} w_{3}^{4}) + \hdots, \
\varphi_{3} = e^{1-\pi i/2} (w_{3} -  w_{1}^{2} w_{2}^{4} w_{3}) +\hdots  \]
The diffeomorphism $\varphi$ is real, hyperbolic, resonant and embedded in an analytic flow
by construction. There are $4$ non-zero weakly resonant monomials of $\varphi$ of degree $7$.
Notice that the non-linear monomial of lowest degree, i.e.
$e^{-2} w_{1}^{2} w_{2} w_{3} e_{1}$ is strongly resonant.
The next example shows that we can find weakly resonant monomials even at
the lowest degree.
\subsection{Example}
\label{subsec:ex2}
Let us consider a example with $X_{N}$ of degree $k=1$.
In particular $X_{s} + X_{N}$ is a real non-diagonalizable linear operator.
Then either all the eigenvalues of  $X_{s} + X_{N}$  are real (and the eigenvalues
are not weakly resonant) or $n \geq 4$.

Let us fix $n=4$. Consider the vector field
\[ X = 8 x_{1} \frac{\partial}{\partial x_{1}} + (x_{1} + 8 x_{2}) \frac{\partial}{\partial x_{2}} +
\left( x_{3} - \frac{\pi}{4} x_{4} \right)  \frac{\partial}{\partial x_{3}} +
\left( \frac{\pi}{4} x_{3} + x_{4} \right)  \frac{\partial}{\partial x_{4}} . \]
The Jordan Chevalley decomposition $X=X_{s} + X_{N}$ of $X$ is given by
$X_{N} = x_{1} \partial / \partial x_{2}$ and $X_{s}= X - X_{N}$.
The vector field $X$ is of the form
\[ X= 8 w_{1} \frac{\partial}{\partial w_{1}} + (w_{1} + 8 w_{2}) \frac{\partial}{\partial w_{2}} +
\left(1 +  \frac{\pi}{4} i \right)  w_{3} \frac{\partial}{\partial w_{3}} +
\left(1 -  \frac{\pi}{4} i \right)  w_{4}  \frac{\partial}{\partial w_{4}}  \]
in the coordinates $(w_{1}, \hdots, w_{4})$ introduced in subsection \ref{subsec:realmon}.
We have $\rho(1)=1$, $\rho(2)=2$, $\rho(3)=4$ and $\rho (4)=3$.
The list of weakly resonant monomials is
\[ w_{3}^{8} \frac{\partial}{\partial w_{1}}, \  w_{4}^{8} \frac{\partial}{\partial w_{1}}, \
w_{3}^{8} \frac{\partial}{\partial w_{2}} \ {\rm and} \ w_{4}^{8} \frac{\partial}{\partial w_{2}} . \]
We define the real vector field $Y' = (w_{3}^{8} + w_{4}^{8}) \partial / \partial w_{1}$.
We denote $\eta = {\rm exp}(Y')$.
We have
\[ \left[ Y' , X_{N} \right] =
\left[ (w_{3}^{8} + w_{4}^{8}) \frac{\partial}{\partial w_{1}} ,  w_{1} \frac{\partial}{\partial w_{2}} \right]
=  (w_{3}^{8} + w_{4}^{8}) \frac{\partial}{\partial w_{2}} . \]
We obtain
$\eta (w_{1},w_{2},w_{3},w_{4}) = (w_{1} + w_{3}^{8} + w_{4}^{8}, w_{2}, w_{3}, w_{4})$.
Formula (\ref{equ:taylorvf}) implies
$\eta^{*} X_{N} = (w_{1} + w_{3}^{8} + w_{4}^{8}) \partial / \partial w_{2}$.
We denote $\varphi = \eta^{\circ (-1)} \circ  {\rm exp}(X) \circ \eta$. We obtain
\[  \varphi  =
(e^{8} w_{1},  e^{8} w_{2}, e^{1+\frac{\pi}{4} i} w_{3}, e^{1- \frac{\pi}{4} i} w_{4}) \circ
(w_{1}, w_{1} + w_{2} + w_{3}^{8} + w_{4}^{8}, w_{3},w_{4})  \]
\[  \implies \varphi  =
(e^{8} w_{1}, e^{8} w_{1} + e^{8} w_{2} +e^{8}  w_{3}^{8} + e^{8} w_{4}^{8},
e^{1+\frac{\pi}{4} i} w_{3}, e^{1- \frac{\pi}{4} i} w_{4}) =  \]
\[   ( e^{8} z_{1}, e^{8} z_{1} + e^{8} z_{2} +
\frac{e^{8}}{2^{7}} \sum_{q=0}^{4} {8 \choose 2q} (-1)^{q} z_{3}^{2q} z_{4}^{8-2q},
e \sqrt{2} \frac{z_{3} - z_{4}}{2}, e \sqrt{2} \frac{z_{3} + z_{4}}{2}) .   \]
The diffeomorphism $\varphi$ is real, hyperbolic, resonant and embedded in an analytic flow
by construction. In spite of this all the non-linear monomials are weakly resonant.
Zhang's conjecture does not hold true in this case.
\subsection{Resonances as an obstacle to embed diffeomorphisms}
In spite of the previous examples we prove theorem \ref{teo:conj}.
It can be interpreted as a version of Zhang's
conjecture.

Let us discuss the optimality of the conditions in the theorem.
The examples
represent two different kind of obstructions to get a positive result.
The example in subsection \ref{subsec:ex1} satisfies $j^{1} X_{1,N} = 0$.
It is embeddable and it contains weakly resonant monomials immediately
above the lowest degree of non-linear
non-vanishing monomials. It does not satisfy (a) since
the lowest degree non-vanishing weakly resonant monomials have
degree $7$ but $f_{4} \neq 0$. It does not satisfy (b) either
since $w_{1} w_{3}^{3} e_{2}$ is a weakly resonant monomial of
degree $2 \leq 4 \leq 7-1$. Notice that $\varphi$
{\it does not have} weakly resonant monomials of degree $4$.
Weakly resonances of lower degree provide multiple choices for
the semisimple part of the embedding flow that can make the
diffeomorphism $\varphi$ to be embeddable.
Such an example justifies the need of restricting
our study to diffeomorphisms satisfying (a) or (b).

The existence of weakly resonant vector fields $Y$ with
$[X_{1,N}, Y] \neq 0$ allows to proceed as in the example \ref{subsec:ex2}
to obtain embeddabble diffeomorphisms having non-vanishing
weakly resonant monomials of the  lowest degree. The example in
section \ref{subsec:ex2} satisfies both (a) and (b).

The examples can be considered as a
classification of the type of counterexamples to the original conjecture.
\begin{proof}[proof of theorem \ref{teo:conj}]
Let $\hat{\varphi} \in \diffh{}{n}$ be the asymptotic development of $\varphi$.
Suppose that $\varphi = {\rm exp}(X)$ for some $X \in \Xrn{\infty}{n}$
(or $\Xrn{}{n}, \Xf{}{n}$) with $j^{1} X = X_{1}$.
Let $\hat{X} \in \Xf{}{n}$ be the asymptotic development of $X$.
We have $\hat{\varphi} = {\rm exp}(\hat{X})$. It suffices to prove
that $\hat{\varphi}$ is not embedded in a formal flow
$\hat{X}$ with $j^{1} \hat{X} = X_{1}$.

Consider the Jordan-Chevalley decomposition
$\hat{\varphi} = \varphi_{s} \circ \varphi_{u}$ of $\hat{\varphi}$.
We have $\varphi_{1,s}=j^{1} \varphi_{s}  = {\rm exp}(X_{1,s})$ and
$j^{1} \varphi_{u}  = {\rm exp}(X_{1,N})$. As a consequence
$\varphi_{1,s}=j^{1} \varphi_{s} $ is diagonal. We have
\[
\varphi_{s} = \varphi_{1,s} + A_{2} + A_{3}  + \hdots, \
\varphi_{u} = \varphi_{1,u} + B_{2} + B_{3} + \hdots
\]
where $A_{j}$ and $B_{j}$ are homogeneous of degree $j$ for $j \geq 2$.
The diffeomorphism $\varphi_{k-1}$ (see eq. (\ref{equ:actdif}))
commutes with $\varphi_{1,s}$ by hypothesis
(remark \ref{rem:resd}).
Since the Jordan-Chevalley decomposition
is compatible with the filtration in the space of jets we obtain
$j^{k-1} \varphi_{s} = \varphi_{1,s}$ or the equivalent property
$A_{2}=\hdots=A_{k-1}=0$.
Let us remark that all the non-vanishing monomials of $A_{k}$ are nonresonant;
otherwise $\varphi_{s}$ is not linearizable.
The vector field $X_{1,N}$ is strongly resonant since
$[X_{1,s},X_{1,N}]=0$ (remark \ref{rem:resvf}).
Therefore $\varphi_{1,u}$ is strongly resonant, it
preserves resonant and nonresonant polynomials. Then $A_{k} \circ \varphi_{1,u}$
is a sum of  nonresonant monomials. Since
$f_{k} = A_{k} \circ \varphi_{1,u} + \varphi_{1,s} \circ B_{k}$
the expression $\varphi_{1,s} \circ B_{k}$ has non-vanishing weakly resonant monomials.
The same property holds true for $B_{k}$.

Let $X_{1,N} + C_{2} + C_{3} + \hdots$ be the homogeneous decomposition of
$\log \varphi_{u}$. Since
$\varphi_{k-1} = \varphi_{1,s} \circ (\varphi_{1,s}^{\circ (-1)} \circ \varphi_{k-1})$
is the Jordan-Chevalley decomposition of $\varphi_{k-1}$
then $\varphi_{1,u} + B_{2} + \hdots + B_{k-1}$ is strongly resonant.
We obtain that $X_{1,N} + C_{2} + \hdots + C_{k-1}$ is strongly resonant by
using equation (\ref{equ:log}).
Let $B_{k}^{w}$ and  $C_{k}^{w}$ be the sum of the weakly resonant monomials of
$B_{k}$ and $C_{k}$ respectively.
It is easy to check out that
\[ {\rm exp}(X_{1,N} + C_{2} + \hdots + C_{k-1} + (C_{k} - C_{k}^{w}))
\]
is of the form $\varphi_{1,u} + B_{2} + \hdots + B_{k-1} + \sum_{j=k}^{\infty} \tilde{B}_{j}$
where $\tilde{B}_{k}$ is a sum of nonresonant and strongly resonant monomials.
As a consequence $B_{k}^{w} \neq 0$ implies $C_{k} \neq C_{k} - C_{k}^{w}$ and
$C_{k}^{w} \neq 0$.

Consider the  decomposition
$\hat{X} = X_{s} + X_{N}$ of $\hat{X}$.
Since ${\rm exp}(X_{s})$ is semisimple and ${\rm exp}(X_{N})$ is unipotent
we obtain $\varphi_{s} = {\rm exp}(X_{s})$ and $\varphi_{u} = {\rm exp}(X_{N})$.
We have $\log \varphi_{u} = X_{N}$ by lemma \ref{lem:exp}.

Suppose that (a) holds true. We obtain
$B_{2}=\hdots=B_{k-1}=0$ and then $C_{2}=\hdots=C_{k-1}=0$.
We denote
$X_{s} = X_{1,s} + X_{s}^{2} + X_{s}^{3} + \hdots$ where
$X_{s}^{j}$ is homogeneous of degree $j$ for any $j \geq 2$.
Let us calculate the degree $k$ component $D_{k}$ of $[X_{s}, \log \varphi_{u}]=0$.
We obtain
\[ 0 = D_{k} = [X_{1,s}, C_{k}] + [X_{s}^{k}, X_{1,N} ].  \]

Suppose that (b) holds true.
Since $j^{k-1} \varphi_{s} = \varphi_{1,s}$ and
$\varphi_{s}^{*} X_{s}=X_{s}$ we deduce that
$X_{s}^{2}$, $\hdots$, $X_{s}^{k-1}$ are resonant.
Condition (b) implies that they are also strongly resonant.
We claim that $X_{s}^{2} = \hdots = X_{s}^{k-1}=0$.
Otherwise $(X_{s})_{j} = (X_{s})_{j} + 0 = X_{1,s} + X_{s}^{j}$
are two different Jordan-Chevalley decompositions for
$j = \min \{ l \in \{2,\hdots,k-1\} : X_{s}^{l} \neq 0 \}$
(see equation (\ref{equ:actvf})).
Again we obtain
\[ 0 = D_{k} = [X_{1,s}, C_{k}] + [X_{s}^{k}, X_{1,N} ].  \]

The hypothesis on $X_{1,N}$ and lemma \ref{lem:monplay} imply that
$ [X_{s}^{k}, X_{1,N} ]$ does not contain non-vanishing weakly resonant monomials.
But clearly $[X_{1,s}, C_{k}]$ does since $C_{k}^{w} \neq 0$ (rem. \ref{rem:resvf}).
We obtain a contradiction.
\end{proof}
\begin{rem}
Let us remark that the condition on $X_{1,N}$ can be weakened. It is obvious
from the proof that it suffices to require $[X_{1,N},Y]=0$ for any
homogeneous weakly resonant vector field $Y$ of degree $k$.
\end{rem}
\begin{cor}
Let $A \in GL(3,{\mathbb R})$ and let $X_{1}$ be
a real logarithm of $A$ such that $X_{1,s}$ is diagonal.
Then any diffeomorphism $\varphi = Az + f_{k} + \hdots$ in
$\diffr{\infty}{n}$ (resp. $\diffrh{}{n}$)
such that $f_{k}$ contains
non-vanishing weakly resonant monomials is non-embeddable
 in the
flow of a vector field $X \in \Xrn{\infty}{n}$
(resp. $\Xr{}{n}$)
such that $j^{1} X = X_{1}$.
\end{cor}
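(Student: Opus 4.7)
The plan is to apply Theorem \ref{teo:conj} under condition (a). Condition (a) holds directly: the hypothesis $\varphi = Az + f_k + \hdots$ means $f_2 = \hdots = f_{k-1}=0$, and by assumption $f_k$ contains non-vanishing weakly resonant monomials. So the only thing that needs checking is the global hypothesis of Theorem \ref{teo:conj} on the linear part, namely that $[X_{1,N}, Y]=0$ for every weakly resonant vector field $Y$.

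For this I would do a short case analysis on the eigenvalues of the real $3\times 3$ matrix $X_1$. Since the complex eigenvalues of a real matrix come in conjugate pairs, either all three eigenvalues of $X_1$ are real, or they form a triple $\{\mu,\lambda+i\omega,\lambda-i\omega\}$ with $\mu$ real and $\omega\neq 0$. In the first case every expression $a_1\mu_1+a_2\mu_2+a_3\mu_3-\mu_j$ is real, so it can never lie in the purely imaginary nonzero set $2\pi i \mathbb{Z}^{*}$, and no weakly resonant monomial exists at all. This would contradict the hypothesis on $f_k$, so we must be in the second case.

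In the second case the three eigenvalues are pairwise distinct in $\mathbb{C}$, since $\mu$ differs from $\lambda\pm i\omega$ whenever $\omega\neq 0$. A $3\times 3$ matrix with three distinct complex eigenvalues is diagonalizable over $\mathbb{C}$, so $X_{1,N}=0$. Then $[X_{1,N},Y]=0$ trivially for every $Y$, and Theorem \ref{teo:conj}(a) yields the claimed nonembeddability. I do not expect a genuine obstacle: the entire content of the corollary is the observation that dimension $3$ is too low for a real matrix to simultaneously admit a nontrivial nilpotent part and a complex conjugate pair of eigenvalues, so the extra hypothesis of Theorem \ref{teo:conj} on $X_{1,N}$ becomes automatic.
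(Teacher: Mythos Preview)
Your proof is correct and follows exactly the paper's approach: the paper's entire argument is the one-line observation that for $n=3$ either all eigenvalues of $X_1$ are real (hence no weakly resonant monomials) or $X_{1,N}=0$, which is precisely the dichotomy you work out. Your expanded case analysis is a faithful elaboration of that sentence.
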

This is a consequence that for $n=3$ either all the eigenvalues of $X_{1}$ are
real (and there are no weakly resonant monomials) or $X_{1,N} = 0$.
There exists a version of the result using property (b) instead of (a).
Notice that for $n=2$ if $A$ is hyperbolic and
has a real logarithm then $\varphi$  in
$\diffr{\infty}{n}$ is always embeddable in a $C^{\infty}$ flow \cite{Zhang}.

{\bf Example.} Consider the diffeomorphism $\varphi \in \diffr{}{3}$ defined by
\[ \varphi = \left(
e^{-2} z_{1}, -e z_{3} -\frac{3}{4} z_{1} z_{2} z_{3}^{2} + \frac{z_{1} z_{2}^{3}}{4},
e z_{2} + \frac{z_{1} z_{3}^{3}}{4}  -\frac{3}{4} z_{1} z_{2}^{2} z_{3}
\right) . \]
The eigenvalues of $j^{1} \varphi$ are $e^{-2}$, $e^{1+\pi i/2}$ and $e^{1-\pi i/2}$.
We consider the change of coordinates $z_{1}=w_{1}$,
$z_{2}=w_{2}+w_{3}$, $z_{3}=i(-w_{2}+w_{3})$ (see eq. (\ref{equ:rtoc})). We obtain
\[ (j^{1} \varphi)(w_{1},w_{2},w_{3}) = \left(
e^{-2} w_{1}, e^{1+\frac{\pi i}{2}} w_{2}, e^{1-\frac{\pi i}{2}} w_{3}
\right) \]
All eigenspaces of $j^{1} \varphi$ are one dimensional, hence any
logarithm $X_{1}$ of $j^{1} \varphi$ is diagonal and $X_{1,N}=0$.
The eigenvalues of $X_{1}$ are of the form
$\mu_{1}=-2 + 2 \pi i k_{1}$, $\mu_{2} = 1 + \pi i/2 + 2 \pi i k_{2}$ and
$\mu_{3} = 1 - \pi i/2 + 2 \pi i k_{3}$ for some
$k_{1},k_{2},k_{3} \in {\mathbb Z}$. We have
\[ \varphi(w_{1},w_{2},w_{3}) = \left( e^{-2} w_{1}, e^{1+\frac{\pi i}{2}} w_{2} + w_{1} w_{3}^{3},
e^{1-\frac{\pi i}{2}} w_{3} + w_{1} w_{2}^{3}
\right) . \]
The diffeomorphism $\varphi$ is resonant. Since
\[ (\mu_{1} + 3 \mu_{2} - \mu_{3}) - (\mu_{1} - \mu_{2} + 3 \mu_{3})
= 4 \pi i (1 + 2 (k_{2} -  k_{3}))  \]
either $w_{1} w_{3}^{3} e_{2}$ or $w_{1} w_{2}^{3} e_{3}$
is weakly resonant for any choice of $X_{1}$
(or equivalently for any choice of $k_{1},k_{2},k_{3} \in {\mathbb Z}$).
Condition (a) implies that $\varphi$ is a real hyperbolic
diffeomorphism that is not embedded in
a flow of $\Xrn{\infty}{n}$, $\Xrn{}{n}$, $\Xn{}{n}$ or $\Xf{}{n}$ (th. \ref{teo:conj}).
\bibliography{rendu}
\end{document}